\tikzset{node distance=3cm, auto}
\newcommand{\R}{\ensuremath{\mathbb{R}}}
\newcommand{\N}{\ensuremath{\mathbb{N}}}
\newcommand{\Z}{\ensuremath{\mathbb{Z}}}
\newcommand*{\ponto}{\makebox[1.5ex]{\textbf{$\cdot$}}}%
\newtheorem {theorem} {Theorem} [section]
\newtheorem {proposition}  {Proposition} [section]
\newtheorem {lemma}  {Lemma}[section]
\newtheorem {definition}  {Definition}[section]
\newtheorem {example} {Example}[section]
\newtheorem {corollary}  {Corollary}[section]
\begin{document}



\title{On Topological Entropy of Piecewise Smooth Vector Fields}
\author[A. A. Antunes, T.  Carvalho and R. Var\~ao.]{André  Amaral Antunes$^1$, Tiago Carvalho$^2$  and Régis Varão$^3$}

\address{$^1$ IBILCE/UNESP, S\~ao Jose do Rio Preto, S\~ao Paulo, Brazil} \address{$^2$ FFCLRP-USP, Ribeir\~ao Preto, S\~ao Paulo, Brazil}

\address{$^3$ IMECC-UNICAMP, Campinas, S\~ao Paulo, Brazil}
\email{andre.antunes@unesp.br}
\email{tiagocarvalho@usp.br}
\email{varao@unicamp.br}



\begin{abstract}
Non-smooth vector fields does not have necessarily the property of uniqueness 
of solution passing through a point and this is responsible to enrich the 
behavior of the system. Even on the plane non-smooth vector fields can 
be chaotic, a feature impossible for the smooth or continuous case. We 
propose a new approach towards a better understanding of chaos for non-smooth 
vector fields and this is done by studying the entropy of the system.

In this work we set the ground for one to begin the study of entropy for 
non-smooth vector fields. We construct a metric space of all possible 
trajectories of a non-smooth vector field, where we define a flow inherited by 
the vector field and then define the topological entropy in this scenario. As a 
consequence, we are able to obtain some general results of this theory and give 
some examples of planar non-smooth vector fields with positive (finite and 
infinite) entropy. 
\end{abstract}

\maketitle

\section{Introduction}\label{intro}


A large range of real problems can be modelled using systems of Ordinary 
Differential Equations (ODEs for short) (or, equivalently, vector fields). Some 
of these problems are governed by two, or more, systems; for example: in a 
cancer treatment, there exist a system of ODEs that model the dynamic when the 
patient is submmited to a severe drug treatment (and the infected cells decrease 
according to a law of evolution in this process) and there is another system of 
ODEs governing the dynamic when this treatment is abruptly interrupted in order 
to provide the  immune system recovery 
(\cite{CarvalhoGoncalvesManceraRodrigues-1-2019-cancer, 
CarvalhoGoncalvesManceraRodrigues-2-2020-cancer}). The same holds is recent 
protocols of HIV treatment 
(\cite{CarvalhoGoncalvesCristianoTonon-NonLiDy-2020-HIV}). There is also 
examples in ecology, modelling predator preys populations when predator suddenly 
change its food preference (\cite{CarvalhoGoncalvesNovaes-2020-Shilnikov}); and 
electric engineering(\cite{Cristiano2019, Wang2020}).

These vector fields governed by two or more laws are known as \textit{Non-smooth 
vector fields} (NSVFs) or \textit{Piecewise smooth vector fields} (PSVF). The 
general mathematical theory about them is being constructed in recent years. 
There are some similarities with the smooth vector fields (e.g. the Poincar\'e- 
Bendixson Theorem \cite{BCEminimal}), but there are many behaviours which are 
typical to NSVFs that are not possible in the smooth case. Describing this has 
been a main source of research for the mathematicians dedicated to this area. 
For example: it is possible to construct a NSVF on the plane that is chaotic 
(\cite{BCEchaotic}), whereas this feature is impossible for the smooth case. The 
main aspect that leads to these differences is the non uniqueness of 
trajectories passing through a point of the domain of the NSVF.

Although NSVF have already a long history it is a field with still many natural 
open questions. Even basic questions concerning topological transitivity have 
not been fully answered. For instance it is known that continuous vector field 
cannot be transitive on the $S^2$ which recently has been shown not to be 
true in the case for NSVF 
\cite{EJV}. In a recent work \cite{EV} the authors establish some basic 
questions concerning topological transitivity and the existence of a trajectory 
which is dense, also that topological transitivity is equivalent to chaotic 
behavior even in some general two dimensional case. These results illustrate 
that NSVF has a long way ahead to be better understood. What we do in this work 
is to apply the basic concepts Ergodic Theory to the NSVF. A first question 
question one must solve is the invariant measures, this has already been done 
(see \cite{NV} and references therein). Entropy plays a major role in Ergodic 
Theory as it is a very successful tool, therefore and that is what we are 
going to explore here.

In Section \ref{sec:NSVFs}, we provide the main ideas and general definitions 
concerning NSVFs that we use along this text; for more details, 
\cite{diBernardo-livro} is a good textbook for general theory and some 
applications, and also \cite{guardia} and \cite{kuznetsov} are very 
comprehensive on theory of NSVF.

In this work, we construct a metric space of all possible trajectories of a NSVF, where we can define a flow inherited by the non-smooth vector field and use it to define the topological entropy of the NSVF.  The difficult in doing it is that the \textit{classical} definition of entropy for a flow relies on uniqueness of trajectory to construct an appropriate transformation of the phase space and then using it to define the entropy. To the best of our knowledge,  there is no work related to the topological entropy of NSVFs.
As consequence, we are able to obtain planar NSVFs with positive (and even, infinite) entropy. In some sense, this explain the complicated dynamic obtained in NSVFs, even on the plane.

\section{Basic Notions on NSVFs}\label{sec:NSVFs}

Let $ U\subset\R^{2} $ an open set, and consider a codimension one manifold $\Sigma$ of $\R^2$ given by $\Sigma =f^{-1}(0)\cap U,$ where $f:\R^2 \rightarrow \R$ is a $ \mathcal{C}^{r} $ function (with $ r>1 $ large enough for our purposes) having $ 0\in \R $ as a regular value. We call $\Sigma$ the \textit{switching manifold} and it splits $ U $ in two open sets: $\Sigma^+=\{q\in U \, | \, f(q) > 0\}$ and $\Sigma^-=\{q \in U\,\,|\,\, f(q)< 0\}$. A NSVF defined in $ \R^{2} $ is a map of the form:
\begin{equation}\label{eq Z}
Z(q)=\left\{
\begin{array}{l}
X(q),\quad $for$ \quad q \in \Sigma^+,\\ 
Y(q),\quad $for$ \quad q \in \Sigma^-.
\end{array}\right.
\end{equation}
where $ X $ and $ Y $ are $ \mathcal{C}^{r} $ vector fields in $ \overline{\Sigma^{+}} $ and $ \overline{\Sigma^{-}} $. In this, we are using the standard convention that for a function to be smooth in a non-open domain $ D $, it means that it can be extended to a smooth function on an open set containing $ D $.

We denote (\ref{eq Z}) simply as $ Z=(X,Y) $. And let $ \mathcal{Z}^{r} $ denote the space of all vector fields of this type. We consider this space with the product topology between topologies of $ \mathfrak{X}^{r}(\overline{\Sigma^{+}}) $ and $ \mathfrak{X}^{r}(\overline{\Sigma^{-}}) $.

In order to define rigorously the flow of $ Z $ we distinguish whether this point is at $ \Sigma^{\pm} $ or $ \Sigma $. For the first two regions, the local trajectory is defined by $ X $ and $ Y $ respectively, as usual, but for $ \Sigma $ we divide this into three regions based on the contact between the vector fields $ X,Y $ and $ \Sigma $ characterized by the Lie derivative $ Xf(p)=\left\langle \nabla f(p), X(p)\right\rangle $  where $\langle \ponto , \ponto \rangle$ is the usual inner product:

$\bullet$ Crossing Region: $\Sigma^c=\{ p \in \Sigma \, | \, Xf(p)\cdot Yf(p)> 0 \}$. Moreover, we denote $\Sigma^{c+}= \{ p \in \Sigma \,\,|\,\, Xf(p)>0, Yf(p)>0 \}$ and $\Sigma^{c-} = \{ p \in \Sigma \,\,|\,\, Xf(p)<0,Yf(p)<0 \}$.

$\bullet$ Sliding Region: $\Sigma^{s}= \{ p \in \Sigma \,\,|\,\, Xf(p)<0, Yf(p)>0 \}$.

$\bullet$ Escaping Region: $\Sigma^{e}= \{ p \in \Sigma \,\,|\,\, Xf(p)>0 ,Yf(p)<0\}$.

These regions are relatively open in $ \Sigma $ and their definitions exclude the tangency points, where $ Xf(p)\cdot Yf(p)=0 $. These points are on the boundary of those regions.
 
\begin{definition}\label{def folds}
	A tangency point $p \in \Sigma$ is a \emph{fold point} of $ X $ if $ Xf(p)=0 $ but $X^{2}f(p)\neq0$, where  $ X^{i}f(p)=\left\langle \nabla X^{i-1}f(p), X(p)\right\rangle $. Moreover, $ p\in\Sigma $ is a \emph{visible} (respectively \emph{invisible}) fold point of $X$ if $Xf(p)=0$ and $X^{2}f(p)> 0$ (respectively $X^{2}f(p)< 0$). A point $ p\in\Sigma $ is a two-fold, if it is a fold point for both $ X $ and $ Y $, and it is visible-visible if visible for both (respectively invisible-visible and invisible-invisible).

\end{definition}

In addition, a tangency point $ p $ is \emph{singular} if $ p $ is a invisible tangency for both $ X $ and $ Y $.  On the other hand, a tangency point $ p $ is \emph{regular} if it is not singular.

In $ \Sigma^{s,e} $, the definition of local orbit is given by Filippov convention \cite{Fi}, described in the following.

\begin{definition}\label{definicao campo deslizante tangencial} 
	Given a point $ p\in\Sigma^s\cup\Sigma^e\subset\Sigma $, we define the \emph{sliding vector field}  at $ p $ as the convex combination of $ X(p) $ and $ Y(p) $ that is tangent to $ \Sigma $. In the planar case, it is given by the expression
	\begin{equation}\label{expfilipov}
	Z^{s}(p)=\frac{Yf(p)X(p)-Xf(p)Y(p)}{Yf(p)-Xf(p)}.
	\end{equation}
	
\end{definition}

Moreover, the sliding vector field can be extend to $\overline{\Sigma^e}\cup\overline{\Sigma^s}$.

Now we establish the classical convention on the trajectories of a PSVF:

\begin{definition}\label{definicao trajetorias}
	The \emph{local trajectory} $ \phi_{Z}(t,p) $ of a PSVF
	$ Z=(X,Y) $ through $p\in U$ is defined as follows:
	\begin{itemize}
		\item[(i)] For $ p\in\Sigma^+ $ and $ p\in\Sigma^{-} $ the trajectory is given by $ \phi_{Z}(t,p)=\phi_{X}(t,p) $ and
		$ \phi_{Z}(t,p)=\phi_{Y}(t,p) $ respectively, where $ t\in I $.
		
		\item[(ii)] For $ p\in\Sigma^{c+} $ and  taking the origin of time at $ p $, the trajectory is defined as $ \phi_{Z}(t,p)=\phi_{Y}(t,p) $ for $ t\in I\cap\{t\leq 0\} $ and $ \phi_{Z}(t,p)=\phi_{X}(t,p) $ for $ t\in I\cap\{t\geq 0\} $. If $ p\in\Sigma^{c-} $  the definition is the same
		reversing time.

		\item[(iii)] For $ p\in\Sigma^e $ and taking the origin of time at $ p $, the trajectory is defined as $ \phi_{Z}(t,p)=\phi_{Z^{s}}(t,p)$ for $t\in I\cap \{t\leq 0\} $ and $ \phi_{Z}(t,p) $ is either $ \phi_{X}(t,p) $ or $ \phi_{Y}(t,p) $ or $ \phi_{Z^{s}}(t,p) $ for $ t\in I\cap \{t\geq 0\} $. For $ p\in\Sigma^s $ the definition is the same reversing time.

		\item[(iv)] For $p$ a regular tangency point and  taking the
		origin of time at $p$, the trajectory is defined as
		$\phi_{Z}(t,p)=\phi_{1}(t,p)$ for $t\in I\cap \{t\leq 0\}$ and
		$\phi_{Z}(t,p)=\phi_{2}(t,p)$ for $t\in I\cap \{t\geq 0\}$, where each $\phi_{1},\phi_{2}$ is either $\phi_{X}$ or $\phi_{Y}$ or $\phi_{Z^{s}}$.
		
		\item[(v)]For $p$ a singular tangency point, $\phi_{Z}(t,p)=p$ for all $t \in \R$.
	\end{itemize}
\end{definition}

\begin{definition}\label{definicao trajetoria global}
	The \textbf{ global trajectory} (\textbf{orbit}) $\Gamma_Z (t,p_0)$ of $Z$ passing through $p_0$ is a union $$\Gamma_Z (t,p_0) = \bigcup_{i\in\mathbb{Z}}\{ \sigma_{i}(t,p_i) : t_i \leq t\leq t_{i+1} \}$$ of preserving-orientation local trajectories $\sigma_{i}(t,p_i)$ satisfying $\sigma_{i}(t_{i+1},p_i)=\sigma_{i+1}(t_{i+1},p_{i+1})=p_{i+1}$ and $t_i\rightarrow\pm\infty$ as $i\rightarrow\pm\infty$.
\end{definition}

\begin{definition}\label{defi-z-invariante}
	A set $ A $ is $ Z- $\emph{invariant} if for each $ p\in A $ and all maximal trajectory $ \Gamma_{Z}(t,p) $ passing through $ p $ it holds $ \Gamma_{Z}(t,p)\subset A $
\end{definition}

\section{Basic Notions on Entropy}\label{sec:entropy}

Let $ f:X\to X $ a continuous map of a compact metric space $ (X,d) $ and $f^i = f \circ f \circ \hdots \circ f$ ($i$-times). Define a sequence of metrics $ d^{f}_{n} $, $ n=1,2,\dots $, by:
\[ d^{f}_{n}(x,y)=\max\limits_{0\leqslant i \leqslant n-1}d(f^{i}(x),f^{i}(y)) . \]

Denote $ B_{f}(x,\varepsilon,n)=\{y\in X \,\,|\,\, d_{n}^{f}(x,y)<\varepsilon\} $ the open ball centered at $ x $, with radius $ \varepsilon $ with respect to metric $ d^{f}_{n} $.
\begin{definition}
	A set $ E\subset X $ is \emph{$ \varepsilon- $dense with respect to $ d^{f}_{n} $} (or $ (n,\varepsilon)-$dense), if $ X\subset \bigcup_{x\in E}B_{f}(x,\varepsilon,n). $ Then the \emph{$ \varepsilon- $capacity} of $ d_{n}^{f} $, denoted $ S_{d}(f,\varepsilon,n) $, is the minimal cardinality of a $ (n,\varepsilon)- $dense set, equivalently, the cardinality of a minimal $ (n,\varepsilon)- $dense set. 
\end{definition}

\begin{definition}
	Consider the exponential growth rate
	\[ h_{d}(f,\varepsilon):=\limsup_{n\to\infty}\dfrac{1}{n}\log S_{d}(f,\varepsilon,n) \]
	and define the \emph{topological entropy} of $ f $, denoted $ h(f) $ by:
	\[ h(f):=\lim_{\varepsilon \to 0} h_{d}(f,\varepsilon). \]
\end{definition}

\begin{definition}\label{definition entropy smooth}
	Let $ X $ is a vector field, and $ \varphi $ is the flow defined by this field, we define the \emph{time-one map} of this field by $ X^{1}(x)=\varphi(x,1) $. And the \emph{topological entropy of the flow} is defined by $ h(\varphi)=h(X^{1}) $.
\end{definition}

\begin{definition}
	Let $ f:M\to M $ and $ g:N\to N $ be two maps. A map $ H:M\to N $ is called a \emph{topological semi-conjugacy from $ f $ to $ g $} provided that (i) $ H $ is continuous, (ii) $ H $ is onto, and (iii) $ H\circ f=g\circ H $. We may also refer to $ f $ being \emph{topologically semi-conjugate to $ g $ by $ H $}. 
	
	The map $ H $ is called \emph{topological conjugacy} if it is a semi-conjugacy and (iv) $ H $ is one to one and onto and has a continuous inverse (so $ H $ is a homeomorphism). We also say that $ f $ and $ g $ are \emph{topologically conjugate} by $ H $, or simply that $ f $ and $ g $ are \emph{conjugate}.
\end{definition}

The following Propositions give some useful properties of topological entropy. Their proofs can be found at \cite{Katok03}.

\begin{proposition}\label{invariant conjugacy}
	Let $ f:M\to M $ and $ g:N\to N $ be two maps, and $ H:M\to N $ a semi-conjugacy from $ f $ to $ g $. Then $ h(f)\geqslant h(g) $. Moreover, if $ H $ is a topological conjugacy, then $ h(f)=h(g) $.
\end{proposition}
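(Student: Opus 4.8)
The plan is to prove the two assertions of Proposition \ref{invariant conjugacy} in order, first the inequality $h(f)\geqslant h(g)$ under a semi-conjugacy, and then equality when $H$ is a conjugacy, the latter being an immediate consequence of the former applied twice. The central tool is the comparison of $(n,\varepsilon)$-spanning (equivalently $(n,\varepsilon)$-dense) sets under the map $H$.

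First I would exploit uniform continuity: since $M$ is a compact metric space and $H$ is continuous, $H$ is uniformly continuous, so for every $\varepsilon>0$ there is $\delta=\delta(\varepsilon)>0$ such that $d_M(x,y)<\delta$ implies $d_N(H(x),H(y))<\varepsilon$. The key observation is then that $H$ carries dynamical balls of $f$ into dynamical balls of $g$: using $H\circ f=g\circ H$ one gets $H\circ f^i=g^i\circ H$ for all $i\geqslant 0$, hence $d_M^f(x,y)<\delta$ forces $d_N^g(H(x),H(y))<\varepsilon$, i.e. $H\bigl(B_f(x,\delta,n)\bigr)\subset B_g(H(x),\varepsilon,n)$. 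Now if $E\subset M$ is a minimal $(n,\delta)$-dense set for $f$, then because $H$ is onto, $H(E)$ is an $(n,\varepsilon)$-dense set for $g$: given any $w\in N$ pick $x\in M$ with $H(x)=w$, then $x\in B_f(x_0,\delta,n)$ for some $x_0\in E$, whence $w=H(x)\in B_g(H(x_0),\varepsilon,n)$. Therefore $S_{d_N}(g,\varepsilon,n)\leqslant |H(E)|\leqslant |E|=S_{d_M}(f,\delta,n)$.

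With this cardinality comparison in hand, dividing by $n$, taking $\limsup_{n\to\infty}$, and using $\log S_{d_N}(g,\varepsilon,n)\leqslant \log S_{d_M}(f,\delta,n)$ yields $h_{d_N}(g,\varepsilon)\leqslant h_{d_M}(f,\delta)\leqslant \lim_{\delta'\to 0}h_{d_M}(f,\delta')=h(f)$ (the last inequality since $h_{d_M}(f,\cdot)$ is monotone non-increasing as the radius shrinks). Letting $\varepsilon\to 0$ gives $h(g)=\lim_{\varepsilon\to 0}h_{d_N}(g,\varepsilon)\leqslant h(f)$. For the ``moreover'' part, if $H$ is a topological conjugacy then $H^{-1}$ is a continuous onto map and $H^{-1}\circ g = f\circ H^{-1}$, so $H^{-1}$ is a semi-conjugacy from $g$ to $f$; applying the inequality just proved in the reverse direction gives $h(g)\geqslant h(f)$, and combining the two inequalities yields $h(f)=h(g)$.

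I do not expect a serious obstacle here, since this is the standard argument; the one point demanding a little care is making sure the $\delta$ produced by uniform continuity of $H$ is used consistently in the definition of the $(n,\delta)$-dense set for $f$ (so that the inclusion of dynamical balls actually closes up), and correctly invoking the monotonicity of $\varepsilon\mapsto h_{d}(f,\varepsilon)$ so that $h_{d_M}(f,\delta)\leqslant h(f)$. One might also remark that compactness of both $M$ and $N$ guarantees finiteness of all the quantities $S_{d}(f,\varepsilon,n)$, so all manipulations with $\log$ are legitimate. Since the proof is classical and already referenced to \cite{Katok03}, I would keep the exposition brief, essentially recording the uniform-continuity step and the inclusion $H(B_f(x,\delta,n))\subset B_g(H(x),\varepsilon,n)$ as the crux.
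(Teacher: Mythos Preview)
The paper does not actually prove this proposition: it is stated with a reference to \cite{Katok03} and closed with \qed. Your argument is the standard one and is correct; the only slip is a wording issue in the monotonicity remark: the map $\varepsilon\mapsto h_{d_M}(f,\varepsilon)$ is non-increasing in $\varepsilon$, hence non-\emph{decreasing} as the radius shrinks, which is exactly what you need for $h_{d_M}(f,\delta)\leqslant \lim_{\delta'\to 0}h_{d_M}(f,\delta')=h(f)$.
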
\qed

\begin{proposition}\label{prop-closed-f-invariant}
	If $ \Lambda $ is a closed $ f- $invariant set, then $ h(f\vert_{\Lambda})\leqslant h(f) $
\end{proposition}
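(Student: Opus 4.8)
The plan is to bound, at every finite scale, the $(n,\varepsilon)$-covering numbers for $f|_{\Lambda}$ by those for $f$ up to a harmless rescaling of $\varepsilon$, and then pass to the limit. First I would record two elementary preliminaries. Since $\Lambda$ is closed in the compact metric space $X$, it is itself compact; since it is $f$-invariant, the restriction $f|_{\Lambda}\colon\Lambda\to\Lambda$ is a well-defined continuous self-map, so $h(f|_{\Lambda})$ is meaningful with respect to the restricted metric. Moreover $(f|_{\Lambda})^{i}=f^{i}|_{\Lambda}$, so the dynamical metrics agree on $\Lambda$: $d^{f|_{\Lambda}}_{n}(x,y)=d^{f}_{n}(x,y)$ for all $x,y\in\Lambda$ and all $n$, and likewise the Bowen balls of $f|_{\Lambda}$ are just $B_{f}(x,\varepsilon,n)\cap\Lambda$ for $x\in\Lambda$.

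The core of the argument is the standard recentering trick. Fix $n\in\N$ and $\varepsilon>0$, and let $E\subset X$ be a minimal $(n,\varepsilon)$-dense set for $f$, so $|E|=S_{d}(f,\varepsilon,n)$ and $X=\bigcup_{x\in E}B_{f}(x,\varepsilon,n)$. Delete from $E$ every $x$ with $B_{f}(x,\varepsilon,n)\cap\Lambda=\emptyset$, and for each surviving $x$ pick a point $y_{x}\in B_{f}(x,\varepsilon,n)\cap\Lambda$. Then $E'=\{y_{x}\}\subset\Lambda$ is $(n,2\varepsilon)$-dense for $f|_{\Lambda}$: given $p\in\Lambda$ there is $x\in E$ with $d^{f}_{n}(p,x)<\varepsilon$; this $x$ was not deleted, and the triangle inequality for $d^{f}_{n}$ gives $d^{f|_{\Lambda}}_{n}(p,y_{x})=d^{f}_{n}(p,y_{x})\leqslant d^{f}_{n}(p,x)+d^{f}_{n}(x,y_{x})<2\varepsilon$. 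Hence
\[
S_{d}(f|_{\Lambda},2\varepsilon,n)\leqslant |E'|\leqslant |E|=S_{d}(f,\varepsilon,n).
\]

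Taking $\tfrac1n\log(\cdot)$ and the $\limsup$ over $n$ yields $h_{d}(f|_{\Lambda},2\varepsilon)\leqslant h_{d}(f,\varepsilon)$ for every $\varepsilon>0$. Since an $(n,\varepsilon_{1})$-dense set is $(n,\varepsilon_{2})$-dense whenever $\varepsilon_{1}<\varepsilon_{2}$, the quantity $h_{d}(\,\cdot\,,\varepsilon)$ is non-increasing in $\varepsilon$, so the limit defining topological entropy is a supremum: $h(g)=\sup_{\varepsilon>0}h_{d}(g,\varepsilon)$. As $\varepsilon$ ranges over $(0,\infty)$ so does $2\varepsilon$, hence
\[
h(f|_{\Lambda})=\sup_{\varepsilon>0}h_{d}(f|_{\Lambda},2\varepsilon)\leqslant\sup_{\varepsilon>0}h_{d}(f,\varepsilon)=h(f),
\]
which is the assertion. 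I do not anticipate a real obstacle: the only delicate point is that the centers of the Bowen balls covering $X$ need not lie in $\Lambda$, and this is precisely what the recentering step fixes, at the cost of a factor $2$ in $\varepsilon$ that is absorbed in the limit $\varepsilon\to0$.
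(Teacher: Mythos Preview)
Your argument is correct. The paper does not actually prove this proposition: it states it, writes \qed, and refers the reader to \cite{Katok03} for the proof; your recentering-and-doubling argument is the standard one and would be accepted there.
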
\qed

\begin{proposition}\label{prop:m-entropy}
	Let $ f:M\to M $ a map. Then $ h(f^{m})=|m| h(f) $.
\end{proposition}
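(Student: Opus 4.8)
The plan is to establish the two inequalities $h(f^{m})\le |m|\,h(f)$ and $h(f^{m})\ge |m|\,h(f)$ separately, after reducing to the case $m\ge 1$. The case $m=0$ is immediate since $f^{0}=\mathrm{id}$ and $d^{\mathrm{id}}_{n}=d$ for all $n$, so $S_{d}(\mathrm{id},\varepsilon,n)$ does not grow with $n$ and $h(\mathrm{id})=0$; and when $f$ is invertible one sees directly from the definitions that a minimal $(n,\varepsilon)$-dense set for $f$ is carried by $f^{n-1}$ onto an $(n,\varepsilon)$-dense set for $f^{-1}$ of the same cardinality, hence $S_{d}(f^{-1},\varepsilon,n)=S_{d}(f,\varepsilon,n)$ and $h(f^{-1})=h(f)$. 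So it suffices to treat $m\ge 1$.

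For $h(f^{m})\le m\,h(f)$, the key observation is that the $f^{m}$-orbit of length $n$ issuing from a point $x$ is a sub-sample of the $f$-orbit of length $mn$ issuing from $x$; hence $d^{f^{m}}_{n}(x,y)\le d^{f}_{mn}(x,y)$ for all $x,y$, so every $(mn,\varepsilon)$-dense set for $f$ is $(n,\varepsilon)$-dense for $f^{m}$, and therefore $S_{d}(f^{m},\varepsilon,n)\le S_{d}(f,\varepsilon,mn)$. Dividing by $n$, taking $\limsup_{n\to\infty}$ (and using that the $\limsup$ along a subsequence is at most the full $\limsup$), and then letting $\varepsilon\to 0$ gives $h(f^{m})\le m\,h(f)$.

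For the reverse inequality I would use the uniform continuity, on the compact space $(M,d)$, of the finitely many iterates $f,f^{2},\dots,f^{m-1}$: given $\varepsilon>0$ there is $\delta\in(0,\varepsilon]$ with $d(x,y)<\delta\Rightarrow d(f^{r}(x),f^{r}(y))<\varepsilon$ for every $0\le r\le m-1$. Writing any index $0\le j\le mn-1$ as $j=mi+r$ with $0\le i\le n-1$ and $0\le r\le m-1$, one checks that $d^{f^{m}}_{n}(x,y)<\delta$ forces $d^{f}_{mn}(x,y)<\varepsilon$; hence an $(n,\delta)$-dense set for $f^{m}$ is an $(mn,\varepsilon)$-dense set for $f$, so $S_{d}(f,\varepsilon,mn)\le S_{d}(f^{m},\delta,n)$.

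To finish, I would pass from the subsequence $\{mn\}_{n}$ back to all integers. Since $d^{f}_{N}\le d^{f}_{N+1}$, the capacity $S_{d}(f,\varepsilon,N)$ is nondecreasing in $N$; sandwiching it between $S_{d}(f,\varepsilon,mn)$ and $S_{d}(f,\varepsilon,m(n+1))$ for $mn\le N\le m(n+1)$, and using that the ratio of the normalizing factors tends to $1$, shows $\limsup_{n}\tfrac{1}{mn}\log S_{d}(f,\varepsilon,mn)=h_{d}(f,\varepsilon)$. Combined with the previous estimate this yields $m\,h_{d}(f,\varepsilon)\le h_{d}(f^{m},\delta)$, and since $\delta\to 0$ as $\varepsilon\to 0$ we obtain $m\,h(f)\le h(f^{m})$. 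Together with the first inequality this gives $h(f^{m})=m\,h(f)=|m|\,h(f)$. The only genuinely delicate point is this last bookkeeping step — comparing the $\limsup$ along multiples of $m$ with the full $\limsup$ — where the monotonicity of the $\varepsilon$-capacity in the time parameter is exactly what makes the two agree; everything else is a direct comparison between the metric families $d^{f^{m}}_{n}$ and $d^{f}_{mn}$.
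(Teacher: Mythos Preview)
Your argument is correct and is precisely the standard proof via comparison of the dynamical metrics $d^{f^{m}}_{n}$ and $d^{f}_{mn}$ together with uniform continuity of the first $m-1$ iterates; the bookkeeping step handling the $\limsup$ along multiples of $m$ via monotonicity of $N\mapsto S_{d}(f,\varepsilon,N)$ is done correctly.

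Note, however, that the paper does not actually supply its own proof of this proposition: it is one of three standard facts about topological entropy that the authors state and immediately refer to \cite{Katok03}, closing with a bare \qed. So there is nothing to compare against beyond observing that your write-up is essentially the argument one finds in that reference (Katok--Hasselblatt). If anything, you have been slightly more explicit than the textbook in justifying why the subsequential $\limsup$ along $\{mn\}$ coincides with the full $\limsup$.
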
\qed

\section{ General Theory on Entropy for NSVFs}



%

In this section, we adapt Definition \ref{definition entropy smooth} for the non-smooth case and prove some preliminary results of this theory. As far as the authors know, this is the first work to cover these aspects of non-smooth vector fields.


Let $ Z=(X,Y) $ a NSVF defined over a compact 2-dimensional surface $ M $ and $ \Omega=\{\gamma:$ global trajectory of $ Z \} $. 

\begin{definition}
	Let $ \Omega $ be the set of all global trajectories, as before. Define $ \rho:\Omega\times\Omega\to\R $ by:
	\[ \rho(\gamma_{1},\gamma_{2})=\sum_{i\in\Z}\frac{1}{2^{|i|}}\int_{i}^{i+1}|\gamma_{1}(t)-\gamma_{2}(t)|dt, \]
	where $ |\gamma_{1}(t)-\gamma_{2}(t)| $ denotes the distance between the points $ \gamma_{1}(t) $ and $ \gamma_{2}(t) $.
\end{definition}

\begin{proposition}
	The space $ (\Omega,\rho) $ is a metric space.
\end{proposition}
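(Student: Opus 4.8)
The plan is to verify the four defining axioms of a metric; everything is routine except checking that $\rho$ is finite and that $\rho(\gamma_1,\gamma_2)=0$ implies $\gamma_1=\gamma_2$, so I would spend most of the effort there.

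First I would observe that $\rho$ is well defined and finite. By Definition~\ref{definicao trajetoria global} a global trajectory is a curve $\gamma\colon\R\to M$ obtained by concatenating orientation-preserving local trajectories at matching times; each local trajectory is continuous (it is a concatenation of flow arcs and sliding arcs, or a constant at a singular tangency), and the matching conditions $\sigma_i(t_{i+1},p_i)=\sigma_{i+1}(t_{i+1},p_{i+1})$ make $\gamma$ continuous on all of $\R$. Hence $t\mapsto|\gamma_1(t)-\gamma_2(t)|$ is continuous, so the Riemann integral over each $[i,i+1]$ makes sense, and since $M$ is compact the integrand is bounded by $\operatorname{diam}(M)$. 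Therefore
\[
0\le\rho(\gamma_1,\gamma_2)\le\operatorname{diam}(M)\sum_{i\in\Z}\frac{1}{2^{|i|}}=3\,\operatorname{diam}(M)<\infty,
\]
which also gives non-negativity. Symmetry is immediate from $|\gamma_1(t)-\gamma_2(t)|=|\gamma_2(t)-\gamma_1(t)|$ for every $t$.

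For the triangle inequality I would apply the pointwise triangle inequality of the distance on $M$, namely $|\gamma_1(t)-\gamma_3(t)|\le|\gamma_1(t)-\gamma_2(t)|+|\gamma_2(t)-\gamma_3(t)|$, then integrate over $[i,i+1]$ using monotonicity of the integral, multiply by $2^{-|i|}$ and sum over $i\in\Z$, all series in sight being absolutely convergent by the previous step; this yields $\rho(\gamma_1,\gamma_3)\le\rho(\gamma_1,\gamma_2)+\rho(\gamma_2,\gamma_3)$.

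Finally, for the identity of indiscernibles: the direction $\gamma_1=\gamma_2\Rightarrow\rho(\gamma_1,\gamma_2)=0$ is trivial. Conversely, if $\rho(\gamma_1,\gamma_2)=0$ then, since every summand is non-negative, each $\int_i^{i+1}|\gamma_1(t)-\gamma_2(t)|\,dt$ equals $0$; the integrand being continuous and non-negative, this forces $\gamma_1(t)=\gamma_2(t)$ on $[i,i+1]$, and letting $i$ range over $\Z$ gives $\gamma_1\equiv\gamma_2$ on $\R$. The only real subtlety — hence the step I would be most careful about — is that elements of $\Omega$ must be treated as genuine continuous curves on $\R$ and not as mere $L^1$-equivalence classes, so that vanishing of the integrals upgrades to equality everywhere; this is precisely where the continuity of global trajectories established in the first step is used.
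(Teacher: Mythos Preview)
Your proof is correct and follows essentially the same approach as the paper: boundedness of the integrand via compactness of $M$ gives finiteness, symmetry and the triangle inequality follow pointwise, and vanishing of $\rho$ forces each integral to vanish, whence continuity of the trajectories yields $\gamma_1=\gamma_2$. Your write-up is in fact a bit more careful than the paper's (you make explicit the continuity of global trajectories and the $3\operatorname{diam}(M)$ bound), but there is no substantive difference in strategy.
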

\begin{proof}
	Let $ \gamma_{1},\gamma_{2}\in \Omega $. Observe that $ M $ being compact, implies $ |\gamma_{1}(t)-\gamma_{2}(t)| $ is bounded for all $ t\in\R $, thus the series above converges for any $ \gamma_{1},\gamma_{2} $.
	
	If $ \rho(\gamma_{1},\gamma_{2})=0 $ then $ \int_{i}^{i+1}|\gamma_{1}(t)-\gamma_{2}(t)|dt=0 $ for all $ i\in\Z $ which implies $ \gamma_{1}(t)=\gamma_{2}(t) $ for all $ t\in\R $ and therefore $ \gamma_{1}=\gamma_{2} $.
	
	The fact $ \rho(\gamma_{1},\gamma_{2})=\rho(\gamma_{2},\gamma_{1}) $ follows immediately from $ |\gamma_{1}(t)-\gamma_{2}(t)|=|\gamma_{2}(t)-\gamma_{1}(t)| $.
	
	And, finally, for the triangle inequality part it is enough to notice that $ |\gamma_{1}(t)-\gamma_{3}(t)|\leqslant |\gamma_{1}(t)-\gamma_{2}(t)|+|\gamma_{2}(t)-\gamma_{3}(t)| $ for all $ t\in\R $ gives the inequality $ \rho(\gamma_{1},\gamma_{2})\leqslant \rho(\gamma_{1},\gamma_{3})+\rho(\gamma_{3},\gamma_{2}) $.
\end{proof}

The Propositions \ref{lemma-epsilon-proximos}, \ref{lemma-converg-pontual} give some important insights about how this metric behaves.

\begin{proposition}\label{lemma-epsilon-proximos}
	Given $ t_{0}, \varepsilon_{0} $, then there exists $ \delta>0 $ such that, for all $ \gamma_{1},\gamma_{2} \in \Omega $ if $ \rho(\gamma_{1},\gamma_{2})<\delta $, then for every $ |t|<t_{0} $, $ |\gamma_{1}(t)-\gamma_{2}(t)|\leqslant \varepsilon_{0} $.
\end{proposition}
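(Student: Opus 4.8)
The plan is to reduce the statement to two elementary facts: every global trajectory is Lipschitz in $t$ with one and the same constant $K$, and in the series defining $\rho$ only finitely many weights $2^{-|i|}$ are relevant on a bounded time window. Granting these, an explicit $\delta$ can be written down directly, with no compactness or contradiction argument needed.

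First I would establish the uniform Lipschitz bound. Along any \emph{local} trajectory the velocity is given by $X$, by $Y$, or by the sliding field $Z^{s}$. Since $M$ is compact and $X,Y$ are continuous on the compact sets $\overline{\Sigma^{+}},\overline{\Sigma^{-}}$, the number $K:=\max\{\sup_{\overline{\Sigma^{+}}}\|X\|,\ \sup_{\overline{\Sigma^{-}}}\|Y\|\}$ is finite, and by \eqref{expfilipov} the field $Z^{s}$ is a convex combination of $X$ and $Y$, so $\|Z^{s}\|\le K$ as well. A global trajectory $\gamma:\R\to M$ is, by Definition \ref{definicao trajetoria global}, a concatenation of such pieces that agree at the junction points (and is constant at a singular tangency), hence continuous and piecewise $C^{1}$ with $\|\dot\gamma\|\le K$ wherever it is differentiable; therefore $|\gamma(t)-\gamma(s)|\le K|t-s|$ for all $t,s\in\R$ and all $\gamma\in\Omega$. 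In particular, for $\gamma_{1},\gamma_{2}\in\Omega$ the function $g(t):=|\gamma_{1}(t)-\gamma_{2}(t)|$ is $2K$-Lipschitz.

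Next comes the integral estimate. Suppose $g(t_{*})>\varepsilon_{0}$ for some $|t_{*}|<t_{0}$; I want to show this forces $\rho(\gamma_{1},\gamma_{2})$ to be bounded below. Set $\ell:=\min\{\varepsilon_{0}/(4K),\,1\}$ and let $J$ be $[t_{*},t_{*}+\ell]$ if $t_{*}\ge 0$ and $[t_{*}-\ell,t_{*}]$ if $t_{*}<0$; then $J\subset[-t_{0}-1,\,t_{0}+1]$, and the Lipschitz bound on $g$ gives $g(t)\ge \varepsilon_{0}-2K\ell\ge \varepsilon_{0}/2$ for $t\in J$. Put $N:=\lceil t_{0}\rceil+2$, so that every $i\in\Z$ with $[i,i+1]\cap[-t_{0}-1,t_{0}+1]\neq\emptyset$ satisfies $|i|\le N$. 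Since $g\ge 0$ everywhere and $J\subset\bigcup_{|i|\le N}[i,i+1]$,
\[
\rho(\gamma_{1},\gamma_{2})
=\sum_{i\in\Z}\frac{1}{2^{|i|}}\int_{i}^{i+1}g(t)\,dt
\;\ge\;\frac{1}{2^{N}}\int_{J}g(t)\,dt
\;\ge\;\frac{\ell\,\varepsilon_{0}}{2^{\,N+1}}.
\]
Hence, choosing $\delta:=\dfrac{\ell\,\varepsilon_{0}}{2^{\,N+1}}=\dfrac{\varepsilon_{0}\min\{\varepsilon_{0}/(4K),\,1\}}{2^{\,\lceil t_{0}\rceil+3}}$ — which depends only on $t_{0}$, $\varepsilon_{0}$ and the fixed field $Z$ — the condition $\rho(\gamma_{1},\gamma_{2})<\delta$ rules out any such $t_{*}$, i.e.\ $|\gamma_{1}(t)-\gamma_{2}(t)|\le\varepsilon_{0}$ for all $|t|<t_{0}$.

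The only genuinely delicate point is the uniform Lipschitz bound in the first step, and this is precisely where the non-smooth structure has to be handled with care: at crossing points the trajectory is still continuous with one-sided velocities bounded by $K$; on sliding/escaping arcs the Filippov formula \eqref{expfilipov} exhibits the governing field as a convex combination of $X$ and $Y$, again bounded by $K$; and at singular tangencies the trajectory is stationary. Once $K$ is secured, everything else is the routine estimate displayed above.
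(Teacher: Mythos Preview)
Your proof is correct and follows essentially the same route as the paper: both hinge on the uniform Lipschitz bound for trajectories (coming from the boundedness of $X$, $Y$ and the convex-combination formula for $Z^{s}$) to argue that if $|\gamma_{1}(t_{*})-\gamma_{2}(t_{*})|>\varepsilon_{0}$ at one instant then the difference stays large on an interval of definite length, forcing a lower bound on $\rho$. The paper phrases this by contradiction (letting $\delta\to 0$ shrinks the interval, which would make the derivative ``explode''), whereas you carry out the contrapositive directly and extract an explicit $\delta$; your version is a bit tighter, but the underlying mechanism is the same.
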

\begin{proof}
	Suppose, by contradiction, that for every $ \delta $, there is $-t_{0}< t_{\delta}<t_{0} $, such that $ |\gamma_{1}(t_{\delta})-\gamma_{2}(t_{\delta})|> \varepsilon_{0} $.
	
	By continuity, there is open interval $ J=(\alpha_{\delta},\beta_{\delta})\ni t_{\delta} $, such that for all $ t\in J $, $ |\gamma_{1}(t)-\gamma_{2}(t)|> \varepsilon_{0} $. Moreover, we know that $ \gamma_{1}, \gamma_{2} $ are differentiable by parts. Hence it makes sense to talk about their derivatives (which could differ if taken by left or right), and we know that because the vector fields that generate the Filippov system are all bounded, these derivatives are all uniformly bounded.
	
	Now,  
	\[ \rho(\gamma_{1},\gamma_{2})>\frac{1}{2^{t_{0}}}\int_{J}|\gamma_{1}(t)-\gamma_{2}(t)|dt>\frac{\varepsilon_{0}}{2^{t_{0}}}(\beta_{\delta}-\alpha_{\delta}) \Longrightarrow  \]
	
	\[\Longrightarrow \frac{\varepsilon_{0}}{2^{t_{0}}}(\beta_{\delta}-\alpha_{\delta})< \rho(\gamma_{1},\gamma_{2})<\delta. \]
	
	Which implies that $ \beta_{\delta}-\alpha_{\delta} $ goes to zero as $ \delta \to 0 $, and then the derivative of this function would explode at some point of $ J $, but it can not happen, since the derivatives are bounded.
	
\end{proof}

\begin{proposition}\label{lemma-converg-pontual}
	Given $ t_{0}>0 $, and $ \left(\gamma_{n}\right)\subset \Omega $ such that $ \gamma_{n}\to\gamma \in \Omega $, then for all $ |t|<t_{0} $, $ \gamma_{n}(t)\to\gamma(t) $.
\end{proposition}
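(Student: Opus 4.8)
The plan is to deduce this statement immediately from Proposition \ref{lemma-epsilon-proximos}, which already does the analytic work (controlling pointwise distances by $\rho$ using the uniform bound on the one-sided derivatives of trajectories). The crucial feature to exploit is that the $\delta$ produced in that proposition depends only on $t_0$ and $\varepsilon_0$ and \emph{not} on the particular pair of trajectories; hence the same $\delta$ serves the entire sequence $(\gamma_n)$ at once.

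Concretely, I would fix $\varepsilon_0 > 0$ arbitrarily and apply Proposition \ref{lemma-epsilon-proximos} with the data $(t_0, \varepsilon_0)$ to obtain $\delta > 0$ with the stated property. Since $\gamma_n \to \gamma$ in $(\Omega, \rho)$, there is $N \in \N$ with $\rho(\gamma_n, \gamma) < \delta$ for all $n \geq N$. Proposition \ref{lemma-epsilon-proximos} then gives $|\gamma_n(t) - \gamma(t)| \leq \varepsilon_0$ for every $n \geq N$ and every $|t| < t_0$ simultaneously. Letting $\varepsilon_0 \to 0$ yields $\gamma_n(t) \to \gamma(t)$ for each $|t| < t_0$; in fact the same argument delivers uniform convergence of $\gamma_n$ to $\gamma$ on the interval $(-t_0, t_0)$.

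There is no real obstacle here: the statement is a formal corollary of the preceding proposition. The only point requiring care is the order of the quantifiers — one must select $\delta$ from $\varepsilon_0$ and $t_0$ \emph{before} invoking convergence of the sequence, so that a single index $N$ works uniformly in $t$. If a self-contained proof were preferred, one could instead rerun the contradiction argument of Proposition \ref{lemma-epsilon-proximos} directly for the sequence (again using that the vector fields $X$ and $Y$ are bounded, so the trajectories' one-sided derivatives are uniformly bounded), but passing through the already-established proposition is the cleaner route.
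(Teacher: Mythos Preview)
Your proposal is correct and follows essentially the same route as the paper's own proof: apply Proposition~\ref{lemma-epsilon-proximos} to get $\delta$ from $(t_0,\varepsilon_0)$, then use $\rho$-convergence of $(\gamma_n)$ to find $N$, and conclude. Your additional remark that the convergence is in fact uniform on $(-t_0,t_0)$ is a valid strengthening that the paper does not state explicitly.
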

\begin{proof}
	Let $ \varepsilon>0 $ given. There exists $ \delta>0 $, such that
	$$ \rho(\gamma_{n},\gamma)<\delta \Rightarrow |\gamma_{n}(t)-\gamma(t)|< \varepsilon, \,\, \forall |t|<t_{0}. $$
	
	And there is $ N\in\N $ such that, if $ n>N $ then $ \rho(\gamma_{n},\gamma)<\delta $. Hence, if $ n>N $, then $ |\gamma_{n}(t)-\gamma(t)|< \varepsilon $, for every $ |t|<t_{0} $.
	
\end{proof}

In this space of all orbits, we can define a natural flow determined by the NSVF $ Z $ as $ F:\R\times\Omega \to\Omega $, $ F(t,\gamma)(.)=\gamma(.+t) $. And then we have the \emph{time-one map}, $ F_{1} (\gamma)=\gamma(.+1) $.

\begin{proposition}
	The map $ F_{1}:\Omega\to\Omega $ defined above is a homeomorphism.
\end{proposition}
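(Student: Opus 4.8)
The plan is to verify the four defining properties of a homeomorphism for $F_1$: that it maps $\Omega$ into $\Omega$, that it is a bijection, and that both $F_1$ and $F_1^{-1}$ are continuous. For the first point, I would observe that if $\gamma$ is a global trajectory of $Z$, then so is its time-translate $\gamma(\cdot + 1)$, since Definition \ref{definicao trajetoria global} of a global trajectory is invariant under reparametrization by a constant shift (the decomposition into local trajectories $\sigma_i$ and the breakpoints $t_i$ simply get shifted, and the conditions $t_i \to \pm\infty$ are preserved). Hence $F_1(\gamma) \in \Omega$. For bijectivity, the obvious candidate inverse is $G_1(\gamma) = \gamma(\cdot - 1)$, and one checks directly that $F_1 \circ G_1 = G_1 \circ F_1 = \mathrm{id}_\Omega$; note $G_1$ also maps $\Omega$ to $\Omega$ by the same translation-invariance argument.

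The heart of the proof is continuity of $F_1$ (and, by the identical argument applied to the shift by $-1$, of $F_1^{-1}$). Fix $\gamma \in \Omega$ and $\varepsilon > 0$; I want $\delta > 0$ so that $\rho(\gamma_1, \gamma) < \delta$ implies $\rho(F_1(\gamma_1), F_1(\gamma)) < \varepsilon$. The natural approach is to split the sum defining $\rho(F_1(\gamma_1), F_1(\gamma)) = \sum_{i \in \Z} 2^{-|i|} \int_i^{i+1} |\gamma_1(t+1) - \gamma(t+1)|\,dt$ into a finite central block $|i| \le N$ and a tail $|i| > N$. Since $M$ is compact, $|\gamma_1(t) - \gamma(t)| \le D := \operatorname{diam}(M)$ for all $t$, so the tail is bounded by $D \sum_{|i| > N} 2^{-|i|}$, which is less than $\varepsilon/2$ for $N$ large, uniformly in $\gamma_1$. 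For the central block, after the change of variables $s = t+1$ the integrals run over $[-N, N+2]$, i.e. over a finite time interval $|s| \le N+2 =: t_0$; by Proposition \ref{lemma-epsilon-proximos} there is $\delta > 0$ such that $\rho(\gamma_1, \gamma) < \delta$ forces $|\gamma_1(s) - \gamma(s)| \le \varepsilon_0$ for all $|s| < t_0$, and choosing $\varepsilon_0$ small enough (e.g. $\varepsilon_0 \sum_{i \in \Z} 2^{-|i|} < \varepsilon/2$) makes the central block less than $\varepsilon/2$. Adding the two estimates gives $\rho(F_1(\gamma_1), F_1(\gamma)) < \varepsilon$.

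The main obstacle is purely bookkeeping: making sure the choice of $N$ (controlling the tail) is genuinely uniform in $\gamma_1$ — which it is, thanks only to compactness of $M$ — and then, with $N$ fixed, invoking Proposition \ref{lemma-epsilon-proximos} with the resulting finite time horizon $t_0 = N+2$ to obtain $\delta$. One small subtlety worth a remark is that Proposition \ref{lemma-epsilon-proximos} as stated gives the pointwise estimate on the open interval $|s| < t_0$; taking $t_0$ slightly larger than $N+2$ (or noting the integrand is bounded and the endpoints have measure zero) handles the closed endpoints harmlessly. Once continuity of $F_1$ is in hand, the same argument verbatim with $+1$ replaced by $-1$ shows $F_1^{-1} = G_1$ is continuous, so $F_1$ is a homeomorphism.
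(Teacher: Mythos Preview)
Your proof is correct, but the route is genuinely different from the paper's. The paper does not split into tail and central block or invoke Proposition~\ref{lemma-epsilon-proximos} at all; instead it proves a global Lipschitz estimate directly. After the change of variables $s=t+1$, each term $2^{-|i|}\int_{i+1}^{i+2}|\gamma_1-\gamma_2|\,ds$ differs from the corresponding term $2^{-|i+1|}\int_{i+1}^{i+2}|\gamma_1-\gamma_2|\,ds$ in the series for $\rho(\gamma_1,\gamma_2)$ only by a factor in the weight, and since $2^{-|i|}\le 2\cdot 2^{-|i+1|}$ for every $i\in\Z$, reindexing yields $\rho(F_1(\gamma_1),F_1(\gamma_2))\le 2\,\rho(\gamma_1,\gamma_2)$. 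This is shorter, gives an explicit Lipschitz constant, and uses nothing beyond the structure of the metric $\rho$. Your argument, by contrast, leans on compactness of $M$ for the tail and on the nontrivial Proposition~\ref{lemma-epsilon-proximos} (which in turn uses the uniform boundedness of the vector fields) for the central block; it is a perfectly valid $\varepsilon$--$\delta$ proof, but it imports more machinery than the situation requires and does not yield the quantitative bound.
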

\begin{proof} 
	The function $ F_{1} $ is invertible, with inverse $ \left(F_{1}\right)^{-1}(\gamma)(.)=\gamma(.-1) $.
	
	Now,  
	\begin{align*}
	\int_{i}^{i+1}|F_{1}(\gamma_{1})(t)-F_{1}(\gamma_{2})(t)|dt&= \int_{i}^{i+1}|\gamma_{1}(t+1)-\gamma_{2}(t+1)|dt = \\
	&= \int_{i+1}^{i+2}|\gamma_{1}(t)-\gamma_{2}(t)|dt.
	\end{align*}
	
	Using the relation above, we obtain:
	\begin{align*}
	\rho(F_{1}(\gamma_1)&,F_{1}(\gamma_2))=\sum_{i\in\Z}\frac{1}{2^{|i|}}\int_{i}^{i+1}|F_{1}(\gamma_{1})(t)-F_{1}(\gamma_{2})(t)|dt=\\
	&=\sum_{i\in\Z}\frac{1}{2^{|i|}}\int_{i+1}^{i+2}|\gamma_{1}(t)-\gamma_{2}(t)|dt=\\
	&= \lim\limits_{k\to\infty}\left(\sum_{i=0}^{k}\frac{1}{2^{i}}\int_{i+1}^{i+2}|\gamma_{1}(t)-\gamma_{2}(t)|dt+ \sum_{i=-k}^{-1}\frac{1}{2^{-i}}\int_{i+1}^{i+2}|\gamma_{1}(t)-\gamma_{2}(t)|dt \right)=\\
	&= \lim\limits_{k\to\infty}\left(2\sum_{i=0}^{k}\frac{1}{2^{i+1}}\int_{i+1}^{i+2}|\gamma_{1}(t)-\gamma_{2}(t)|dt+ \frac{1}{2}\sum_{i=-k}^{-1}\frac{1}{2^{-i-1}}\int_{i+1}^{i+2}|\gamma_{1}(t)-\gamma_{2}(t)|dt \right)=\\
	&= \lim\limits_{k\to\infty}\left(2\sum_{j=1}^{k+1}\frac{1}{2^{j}}\int_{j}^{j+1}|\gamma_{1}(t)-\gamma_{2}(t)|dt+ \frac{1}{2}\sum_{i=-k+1}^{0}\frac{1}{2^{-j}}\int_{j}^{j+1}|\gamma_{1}(t)-\gamma_{2}(t)|dt \right)\leqslant\\
	&\leqslant 2\lim\limits_{k\to\infty}\left( \sum_{j=1}^{k+1}\frac{1}{2^{j}}\int_{j}^{j+1}|\gamma_{1}(t)-\gamma_{2}(t)|dt+ \sum_{j=-k+1}^{0}\frac{1}{2^{-j}}\int_{j}^{j+1}|\gamma_{1}(t)-\gamma_{2}(t)|dt \right)=\\
	&\leqslant 2 \sum_{j=-\infty}^{\infty}\frac{1}{2^{|j|}}\int_{j}^{j+1}|\gamma_{1}(t)-\gamma_{2}(t)|dt=2\rho(\gamma_1,\gamma_2).
	\end{align*}
	
	Hence $ F_{1} $ is continuous. The proof of continuity of the inverse is analogous. 
\end{proof}

\begin{definition}\label{definition entropy nsvf}
	In the conditions above, we define the topological entropy $ h $ of $ Z=(X,Y) $ on $ M $, as the topological entropy of the map $ F_{1}  $ in $ \Omega $, that is, $ h(Z):=h(F_{1} ) $.
\end{definition}

Note that, if we consider $ Z=(X,X) $ then $ Z $ is a smooth vector field. In this case, the value given by Definition \ref{definition entropy nsvf} should coincide with the value obtained in Definition \ref{definition entropy smooth}. The next proposition assures it.

\begin{proposition}\label{conjugacy smooth not smooth}
	Let $ Z=(X,X) $ a NSVF defined on $ M $, then the function $ f:M\to\Omega $ defined by $ f(x)=\gamma_{x} $ is a homeomorphism and the following diagram commutes:
	
	\begin{figure}[H]
		\begin{center}
			\begin{tikzpicture}
			\node (A) {$(M,d)$};
			\node (B) [right of=A] {$(M,d)$};
			\node (C) [below of=A] {$(\Omega,\rho)$};
			\node (D) [below of=B] {$(\Omega,\rho)$};
			\large\draw[->] (A) to node {\mbox{{\footnotesize $X^{1}$}}} (B);
			\large\draw[->] (C) to node {\mbox{{\footnotesize $F_{1} $}}} (D);
			\large\draw[->] (A) to node {\mbox{{\footnotesize $ f $}}} (C);
			\large\draw[->] (B) to node {\mbox{{\footnotesize $ f $}}} (D);
			\end{tikzpicture}
		\end{center}
		\caption{}
		\label{diagram1}
	\end{figure}
\end{proposition}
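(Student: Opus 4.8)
The plan is to check the four defining properties of a topological conjugacy for $f$ and then read off the commuting square. Since $Z=(X,X)$ is a genuine smooth vector field, through each $x\in M$ there is a \emph{unique} global trajectory, so the map $f(x)=\gamma_x$ is well defined (here $\gamma_x$ is the orbit with $\gamma_x(0)=x$, parametrized so that $\gamma_x(t)=\varphi(x,t)$ where $\varphi$ is the flow of $X$). First I would verify that the square commutes: by Definition \ref{definition entropy smooth}, $X^1(x)=\varphi(x,1)$, and $f(X^1(x))$ is the orbit through $\varphi(x,1)$, i.e. the orbit $t\mapsto\varphi(\varphi(x,1),t)=\varphi(x,t+1)=\gamma_x(t+1)=F_1(\gamma_x)(t)=F_1(f(x))(t)$. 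Hence $f\circ X^1=F_1\circ f$, which is exactly the diagram. This is the easy algebraic part and uses only uniqueness of solutions for the smooth field $X$.

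Next I would establish that $f$ is a bijection. Injectivity: if $\gamma_x=\gamma_y$ as elements of $\Omega$, then evaluating at $t=0$ gives $x=\gamma_x(0)=\gamma_y(0)=y$. Surjectivity: every global trajectory of $Z=(X,X)$ is, by Definition \ref{definicao trajetoria global} together with uniqueness for $X$, of the form $t\mapsto\varphi(x,t)$ for $x=\gamma(0)$, so $\gamma=f(\gamma(0))$ lies in the image. (One should remark that for a smooth vector field there are no sliding, escaping, or tangency phenomena forcing branching, so the global trajectory through a point really is unique — this is where the hypothesis $Z=(X,X)$ is used.)

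It remains to prove that $f$ and $f^{-1}$ are continuous; this is the substantive point. For continuity of $f$ I would use Proposition \ref{lemma-converg-pontual} in reverse: if $x_n\to x$ in $(M,d)$, then by continuous dependence of the flow $\varphi$ of the smooth field $X$ on initial conditions, $\varphi(x_n,t)\to\varphi(x,t)$ uniformly for $t$ in any compact interval $[-k,k]$; feeding this into the series defining $\rho$ and splitting the sum into the finitely many terms with $|i|\le k$ (controlled by the uniform convergence) and the tail $\sum_{|i|>k}2^{-|i|}\mathrm{diam}(M)$ (made small by choosing $k$ large, using compactness of $M$), one gets $\rho(\gamma_{x_n},\gamma_x)\to 0$. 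For continuity of $f^{-1}$, note $f^{-1}(\gamma)=\gamma(0)$, i.e. $f^{-1}$ is the ``evaluation at $0$'' map $\mathrm{ev}_0:\Omega\to M$; its continuity is immediate from Proposition \ref{lemma-converg-pontual} with $t_0=1$ and $t=0$: $\rho(\gamma_n,\gamma)\to 0$ forces $\gamma_n(0)\to\gamma(0)$. Thus $f$ is a homeomorphism, the diagram commutes, and $X^1$ and $F_1$ are topologically conjugate via $f$.

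\textbf{Main obstacle.} The only genuinely delicate step is the continuity of $f$, because it requires converting continuous dependence on initial conditions (a statement about uniform convergence on compact time intervals) into $\rho$-convergence (a statement about a weighted integral over all of $\R$); the compactness of $M$ is essential to control the tail of the series uniformly. Everything else — the bijection, the commuting square, and the continuity of $f^{-1}$ — is essentially formal given the propositions already proved.
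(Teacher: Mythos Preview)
Your proof is correct and follows the same overall skeleton as the paper: well-definedness and bijectivity from uniqueness of solutions, continuity of $f^{-1}=\mathrm{ev}_0$ via Proposition~\ref{lemma-epsilon-proximos}/\ref{lemma-converg-pontual}, and the commuting square by the flow identity $\varphi(\varphi(x,1),t)=\varphi(x,t+1)$.

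The one substantive difference is in the continuity of $f$. The paper asserts the existence of a single constant $k_1$ with $|\gamma_{x_1}(t)-\gamma_{x_2}(t)|\le k_1|x_1-x_2|$ for \emph{all} $t\in\R$, then sums the series directly. That global-in-time Lipschitz bound is not generally available for a smooth flow on a compact manifold (think of exponential separation near a hyperbolic equilibrium), so the paper's shortcut is at best heuristic. Your argument---uniform convergence of $\varphi(x_n,t)\to\varphi(x,t)$ on each compact interval $[-k,k]$ from continuous dependence, combined with the tail estimate $\sum_{|i|>k}2^{-|i|}\,\mathrm{diam}(M)$ from compactness of $M$---is the standard rigorous way to turn local-in-time control into $\rho$-convergence, and it closes that gap cleanly. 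So your route is slightly longer but genuinely more robust on exactly the step you flagged as the main obstacle.
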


\begin{proof}
	Let $ \varphi $ denote the flow of $ X $. And define $ f:M\to\Omega $ as $ f(x)=\gamma_{x} $, where $ \gamma_{x}(t)=\varphi(t,x) $ for all $ t $. From Existence and Uniqueness Theorem, the function $ f $ is well-defined and it has inverse $ f^{-1}(\gamma)=\gamma(0) $.
	
	Let $ x_{1}, x_{2}\in M $, $ \gamma_{x_{1}}=f(x_{1}), \gamma_{x_{2}}=f(x_{2}) $. Since $ \varphi $ is smooth on $ M $, there exists a constant $ k_{1} $ such that $ |\gamma_{x_{1}}(t)-\gamma_{x_{2}}(t)|\leqslant k_{1}|x_{1}-x_{2}|$, for all $ t $. This implies that if $ |x_{1}- x_{2}|<\varepsilon $, then $ \int_{i}^{i+1}|\gamma_{x_{1}}(t)-\gamma_{x_{2}}(t)|dt<k_{1}\varepsilon $. 
	
	Hence
	\[ \rho(\gamma_{x_{1}},\gamma_{x_{2}})=\sum_{i\in\Z} \frac{1}{2^{|i|}}\int_{i}^{i+1}|\gamma_{x_{1}}(t)-\gamma_{x_{2}}(t)|dt<3k_{1}\varepsilon. \]
	
	Thus $ f $ is continuous.
	
	Now, by Proposition \ref{lemma-epsilon-proximos}, given $ \varepsilon>0 $, there is $ \delta>0 $, such that if $ \rho(\gamma_{1},\gamma_{2})<\delta $ then $ \gamma_{1}(0)=x_{1} $ and $ \gamma_{2}(0)=x_{2} $ are $ \varepsilon- $close, which proves that $ f^{-1} $ is continuous.
	
	Moreover, for all $ x\in M $ and all $ t\in\R $, $ f(X^{1}(x))(t)=f(\varphi(1,x))(t)=\gamma_{\varphi(1,x)}(t)=\gamma_{x}(t+1)=F_{1} (\gamma_{x})(t)= F_{1} (f(x))(t) $. Hence 
	\[ f\circ X^{1}=F_{1} \circ f. \]
	%
	%
	%
	%
\end{proof}

\begin{corollary}
	The entropy of $ Z=(X,X) $ (Definition \ref{definition entropy nsvf}) and entropy of $ X $ (Definition \ref{definition entropy smooth}) are the same.
\end{corollary}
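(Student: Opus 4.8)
The corollary is an immediate consequence of Proposition~\ref{conjugacy smooth not smooth} together with Proposition~\ref{invariant conjugacy}, so the plan is essentially to assemble these two ingredients. First I would invoke Proposition~\ref{conjugacy smooth not smooth} to conclude that the map $f:M\to\Omega$, $f(x)=\gamma_x$, is a homeomorphism that conjugates the time-one map $X^1$ of the smooth vector field $X$ to the time-one map $F_1$ of the flow on $\Omega$; that is, $f\circ X^1 = F_1\circ f$ with $f$ a homeomorphism. In particular $f$ is a topological conjugacy in the sense of the definition preceding Proposition~\ref{invariant conjugacy}.

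Next I would apply Proposition~\ref{invariant conjugacy}: since $X^1$ and $F_1$ are topologically conjugate, their topological entropies coincide, i.e. $h(X^1)=h(F_1)$. By Definition~\ref{definition entropy smooth} the left-hand side is exactly the entropy of the smooth vector field $X$, namely $h(\varphi)=h(X^1)$, while by Definition~\ref{definition entropy nsvf} the right-hand side is exactly $h(Z)=h(F_1)$ for $Z=(X,X)$. Chaining these equalities gives $h(Z)=h(X)$, which is the assertion.

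There is really no substantive obstacle here: the only thing to be careful about is that Proposition~\ref{invariant conjugacy} as stated requires the underlying spaces to be compact metric spaces (so that entropy is well-defined), so I would remark that $(M,d)$ is compact by hypothesis and that $(\Omega,\rho)$, being homeomorphic to $M$ via $f$, is therefore also a compact metric space. With that observation the hypotheses of Proposition~\ref{invariant conjugacy} are met and the proof is complete.

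\begin{proof}
	By Proposition~\ref{conjugacy smooth not smooth}, the map $f:M\to\Omega$, $f(x)=\gamma_x$, is a homeomorphism satisfying $f\circ X^1 = F_1\circ f$; hence it is a topological conjugacy between $X^1$ and $F_1$. Since $(M,d)$ is compact and $f$ is a homeomorphism, $(\Omega,\rho)$ is a compact metric space as well, so topological entropy is defined for both maps. Proposition~\ref{invariant conjugacy} then yields $h(X^1)=h(F_1)$. By Definition~\ref{definition entropy smooth} we have $h(X)=h(X^1)$, and by Definition~\ref{definition entropy nsvf} we have $h(Z)=h(F_1)$ for $Z=(X,X)$. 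Therefore $h(Z)=h(X)$.
\end{proof}
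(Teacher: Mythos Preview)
Your proof is correct and follows exactly the same approach as the paper, which simply cites Propositions~\ref{invariant conjugacy} and~\ref{conjugacy smooth not smooth}. Your version is more explicit (in particular the remark that $(\Omega,\rho)$ inherits compactness from $M$ via the homeomorphism $f$), but the argument is identical in substance.
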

\begin{proof}
	Propositions \ref{invariant conjugacy} and \ref{conjugacy smooth not smooth}. 
\end{proof}

\begin{theorem}\label{thm-closed-invariant}
	Let $ Z $ be a NSVF over $ M $, and $ K\subset M $ a closed invariant set for $ Z $. Then $ h(Z)\geqslant h(Z\vert_{K}) $.
\end{theorem}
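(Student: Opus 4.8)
The plan is to exhibit the set $\Omega_K$ of all global trajectories of $Z$ that are contained in $K$ as a closed $F_1$-invariant subset of $(\Omega,\rho)$, and then invoke Proposition \ref{prop-closed-f-invariant} together with the fact that the flow on $\Omega_K$ is precisely the dynamical object whose entropy defines $h(Z\vert_K)$. Concretely, set $\Omega_K=\{\gamma\in\Omega : \gamma(t)\in K \text{ for all } t\in\R\}$. Since $K$ is $Z$-invariant, every global trajectory through a point of $K$ stays in $K$, so $\Omega_K$ is exactly the space of global trajectories of the restricted system $Z\vert_K$; hence by Definition \ref{definition entropy nsvf} we have $h(Z\vert_K)=h(F_1\vert_{\Omega_K})$. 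It is immediate that $F_1(\Omega_K)=\Omega_K$, because $\gamma(\cdot)\in K$ for all time if and only if $\gamma(\cdot+1)\in K$ for all time, so $\Omega_K$ is $F_1$-invariant.

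The key step is to show $\Omega_K$ is closed in $(\Omega,\rho)$. Suppose $\gamma_n\in\Omega_K$ and $\gamma_n\to\gamma$ in $\Omega$. Fix $t\in\R$ and choose $t_0>|t|$. By Proposition \ref{lemma-converg-pontual}, $\gamma_n(t)\to\gamma(t)$ in $M$. Each $\gamma_n(t)\in K$ and $K$ is closed, so $\gamma(t)\in K$. As $t$ was arbitrary, $\gamma\in\Omega_K$; thus $\Omega_K$ is closed, and being a closed subset of the compact metric space $\Omega$ (compactness of $\Omega$ being implicit in the entropy framework of Section \ref{sec:entropy}, or else one works directly with the capacity definition, which does not require compactness of the subspace), it is itself a compact metric space on which $F_1$ restricts to a homeomorphism.

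Finally, apply Proposition \ref{prop-closed-f-invariant} with $f=F_1$ and $\Lambda=\Omega_K$ to conclude $h(F_1\vert_{\Omega_K})\leqslant h(F_1)$, i.e. $h(Z\vert_K)\leqslant h(Z)$, which is the claim.

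I expect the only genuinely delicate point to be the identification of $\Omega_K$ with the trajectory space of $Z\vert_K$ — one must be careful that restricting $Z$ to $K$ does not create or destroy local trajectories at $\Sigma\cap K$ (for instance, sliding or crossing pieces that leave $K$ are automatically excluded by $Z$-invariance of $K$, and conversely every local-trajectory concatenation allowed for $Z\vert_K$ is one allowed for $Z$). The closedness argument via Proposition \ref{lemma-converg-pontual} and the invariance are then routine.
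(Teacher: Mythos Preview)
Your proposal is correct and follows essentially the same route as the paper: identify $\Omega_K$ as a closed $F_1$-invariant subset of $\Omega$ and invoke Proposition~\ref{prop-closed-f-invariant}. The only cosmetic difference is in the closedness step: the paper argues by contradiction (if $\gamma\notin\Omega_K$ then, by $Z$-invariance of $K$, the whole image of $\gamma$ lies in the open set $M\setminus K$, so $\gamma$ cannot be approximated by trajectories in $\Omega_K$), whereas you argue directly via Proposition~\ref{lemma-converg-pontual} and closedness of $K$ --- your version is arguably cleaner and makes explicit use of the pointwise-convergence lemma already proved. Your remarks about the identification of $\Omega_K$ with the trajectory space of $Z\vert_K$ and about compactness of $\Omega$ are well taken; the paper leaves both points implicit.
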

\begin{proof}
	Let $ \Omega $ represent the set of all trajectories of $ Z $ over $ M $. Since $ K\subset M $ is closed and $ Z- $invariant, the set $ \Omega_{K}\subset \Omega $, of all trajectories through points of $ K $ is invariant for $ F_{1} $. Moreover, given any sequence $ \left(\gamma_{n}\right)\subset\Omega_{K} $, such that $ \gamma_{n}\to\gamma $, we have that, $ \gamma\in\Omega_{K} $, for if it is not, then $ \gamma\subset M\setminus K $, which is open, then for every point $ x $ of $ \gamma $, there is $ \varepsilon_{x} $ such that $ B(x,\varepsilon_{x})\subset M\setminus K $, which implies $ \gamma $ cannot be limit of trajectories of $ \Omega_{K} $. Thus $ \Omega_{K}\subset \Omega $ is closed.
	
	Now, $ \Omega_{K}\subset \Omega $ is closed and $ F_{1}- $invariant, then $ h(Z_{K})=h(F_{1}\vert_{\Omega_{K}})\leqslant h(F_{1})=h(Z) $, by Proposition \ref{prop-closed-f-invariant}.
\end{proof}

\section{Finite Positive Topological Entropy }

In this section we prove results about the existence of NSVFs of positive topological entropy, but finite.

\begin{theorem}\label{thm-entropy-log-alpha}
	Given $ \alpha\in\Z, \alpha\geqslant 2 $ there exists NSVF $ Z $ such that $ h(Z)=\log\alpha $.
\end{theorem}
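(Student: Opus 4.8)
The plan is to build, for each integer $\alpha\ge 2$, an explicit planar NSVF $Z$ whose space of global trajectories $(\Omega,\rho)$ carries a dynamical system semi-conjugate in both directions to a well-understood model with entropy $\log\alpha$ — the natural candidate being the full one-sided (or two-sided) shift on $\alpha$ symbols, $\sigma\colon\{1,\dots,\alpha\}^{\N}\to\{1,\dots,\alpha\}^{\N}$, which has topological entropy $\log\alpha$. The mechanism that produces symbolic dynamics in a NSVF is the non-uniqueness of trajectories at the switching manifold: at an escaping region or a visible-visible two-fold, an orbit reaching $\Sigma$ may continue in several ways, and each ``choice'' is a symbol. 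So I would design $Z$ so that a distinguished periodic-like orbit passes through $\Sigma$ exactly once per unit of time (after a time reparametrization, or by tuning the vector fields so the return time is $1$), and at that crossing there are precisely $\alpha$ admissible continuations, all of which return to the same crossing locus after time $1$.

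Concretely, first I would take an annular region with a switching curve $\Sigma$ arranged so that $X$ (in $\Sigma^+$) and $Y$ (in $\Sigma^-$) steer every trajectory back to an escaping segment $I\subset\Sigma$ in exactly one time unit; on $I$ one prescribes $\alpha$ disjoint sub-arcs (or $\alpha$ distinguished exit directions) that funnel, under the flow, onto all of $I$ again, so that the ``first-return coding'' of any global trajectory is an arbitrary sequence in $\{1,\dots,\alpha\}^{\Z}$. Second, I would make the correspondence $\gamma\mapsto$ (its itinerary) into the semi-conjugacy: define $H\colon\Omega\to\{1,\dots,\alpha\}^{\Z}$ reading off which sub-arc $\gamma$ uses at integer times, and check $H$ is continuous (using Proposition~\ref{lemma-converg-pontual}: $\rho$-closeness forces pointwise closeness on $[-t_0,t_0]$, hence agreement of finitely many symbols since the sub-arcs are a positive distance apart), surjective (every itinerary is realizable, by construction), and that $H\circ F_1=\sigma\circ H$ (advancing time by $1$ shifts the itinerary). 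By Proposition~\ref{invariant conjugacy} this gives $h(Z)=h(F_1)\ge h(\sigma)=\log\alpha$. Third, for the reverse inequality $h(Z)\le\log\alpha$ I would argue that two global trajectories with the same itinerary must in fact coincide — between consecutive crossings the dynamics is given by the smooth flows $\varphi_X,\varphi_Y$, which are uniquely determined by the entry point, and the entry point is pinned down by the symbol — so $H$ is actually injective, hence a homeomorphism onto a subshift of finite type conjugate to the full $\alpha$-shift, and the ``moreover'' clause of Proposition~\ref{invariant conjugacy} yields equality.

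The main obstacle is the reverse inequality, i.e.\ showing the itinerary map $H$ is injective, or equivalently that the constructed $Z$ has no ``extra'' non-uniqueness beyond the $\alpha$ prescribed choices. One must ensure that away from the designed escaping segment $I$ every trajectory is forced (no further escaping/sliding regions, no singular tangencies that would create stationary orbits, and the regular tangencies on $\partial I$ are handled so they do not spawn uncountably many continuations); and one must verify that two trajectories agreeing at all integer times and using the same sub-arcs actually agree for all $t$, which requires the geometry to be such that the sub-arc label plus the crossing time (an integer) determines the exact entry point on $\Sigma$, not merely the sub-arc. A clean way to secure this is to collapse each sub-arc to a single admissible exit point (so $I$ consists of $\alpha$ isolated escaping-to-crossing transition points wired into a closed ``necklace'' of orbit segments), making $\Omega$ literally homeomorphic to $\{1,\dots,\alpha\}^{\Z}$ with $F_1$ conjugate to the shift; then $h(Z)=\log\alpha$ is immediate and all that remains is to write down $X$, $Y$, and $f$ explicitly and check the contact conditions (signs of $Xf$, $Yf$, $X^2f$) that realize the required escaping region and visible folds.
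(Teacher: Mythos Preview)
Your ``necklace'' construction is exactly what the paper builds: a bouquet of $\alpha$ closed arcs $I_1,\dots,I_\alpha$ all passing through a single point (the origin), realized by a NSVF with $2\alpha$ angular sectors and normalized so each arc is traversed in time $1$. The identification of trajectories with sequences in $\{1,\dots,\alpha\}^{\Z}$ is the same idea in both; the paper simply carries it out by a direct count of $(n,\varepsilon)$-spanning sets rather than by naming a conjugacy. So the construction and the lower bound $h(Z)\ge\log\alpha$ are fine.

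The gap is in your upper bound. The itinerary map $H$ is \emph{not} injective on all of $\Omega$, because of the phase: a global trajectory $\gamma$ need not hit the common point at integer times. If $\gamma(s)=0$ for some $s\in(0,1)$, then $\gamma$ and its time-shift $F_{-s}(\gamma)$ have the same itinerary, so $H$ collapses the whole arc $\{F_u(\gamma):u\in[0,1)\}$ to a single sequence. Your sentence ``the entry point is pinned down by the symbol'' is true only once the entry \emph{time} is known, and the itinerary does not record it; so $H$ is a semi-conjugacy, not a conjugacy, and Proposition~\ref{invariant conjugacy} only yields $h(F_1)\ge\log\alpha$.

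The paper handles this by passing to the closed $F_1$-invariant subset $\Omega^{*}=\{\gamma\in\Omega:\gamma(0)=0\}$ and computing spanning numbers there directly (for $\varepsilon=\mu/2^{m}$ one gets $S_\rho(F_1,\varepsilon,n)=\alpha^{2m+n}$, hence $h(F_1|_{\Omega^{*}})=\log\alpha$). On $\Omega^{*}$ your $H$ \emph{is} a homeomorphism onto the full shift, so your argument goes through verbatim there. To recover the equality on all of $\Omega$ you then need one extra step: observe that $\Omega$ is (up to homeomorphism) $\Omega^{*}\times[0,1)$ with $F_1$ acting as the shift on the first factor and the identity on the second, so the phase contributes only a factor $O(1/\varepsilon)$ to spanning numbers and nothing to the exponential growth rate.
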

\begin{proof}
	Consider the polynomial NSVF $ Z=(X,Y) $ with $ \Sigma=\{y=0\} $, where $ X=(1, 1-x) $ and $ Y=(-1,1-x) $. They are both symmetric and the point $ (1,0) $ is an invisible-invisible two-fold, and there is a closed trajectory that goes through the origin $ (0,0) $.
	
	Let $ \alpha=3 $, and consider six rays from the origin (one at each multiple of $ \frac{\pi}{3} $). In that way, the plane is divided into six different regions. Number each region from 1 to 6, counter clockwise. We define $ X $ in region 1, and $ Y $ in region 6. Now, define a vector field in each one of these regions, such that in regions 3 and 5 we have a phase portrait that is the one of $ X $ rotated, and in regions 2 and 4 the phase portrait is symmetric to the one of $ Y $ (see Figure \ref{rosacea-3}). Now there are three closed arcs that goes through the origin. That defines a NSVF with six different regions such that, apart from three invisible two-folds, and the origin, every other point is either regular or crossing, and every trajectory that does not goes through the origin is closed.
	
	\begin{figure}[h]
		\begin{overpic}[width=.6\linewidth]{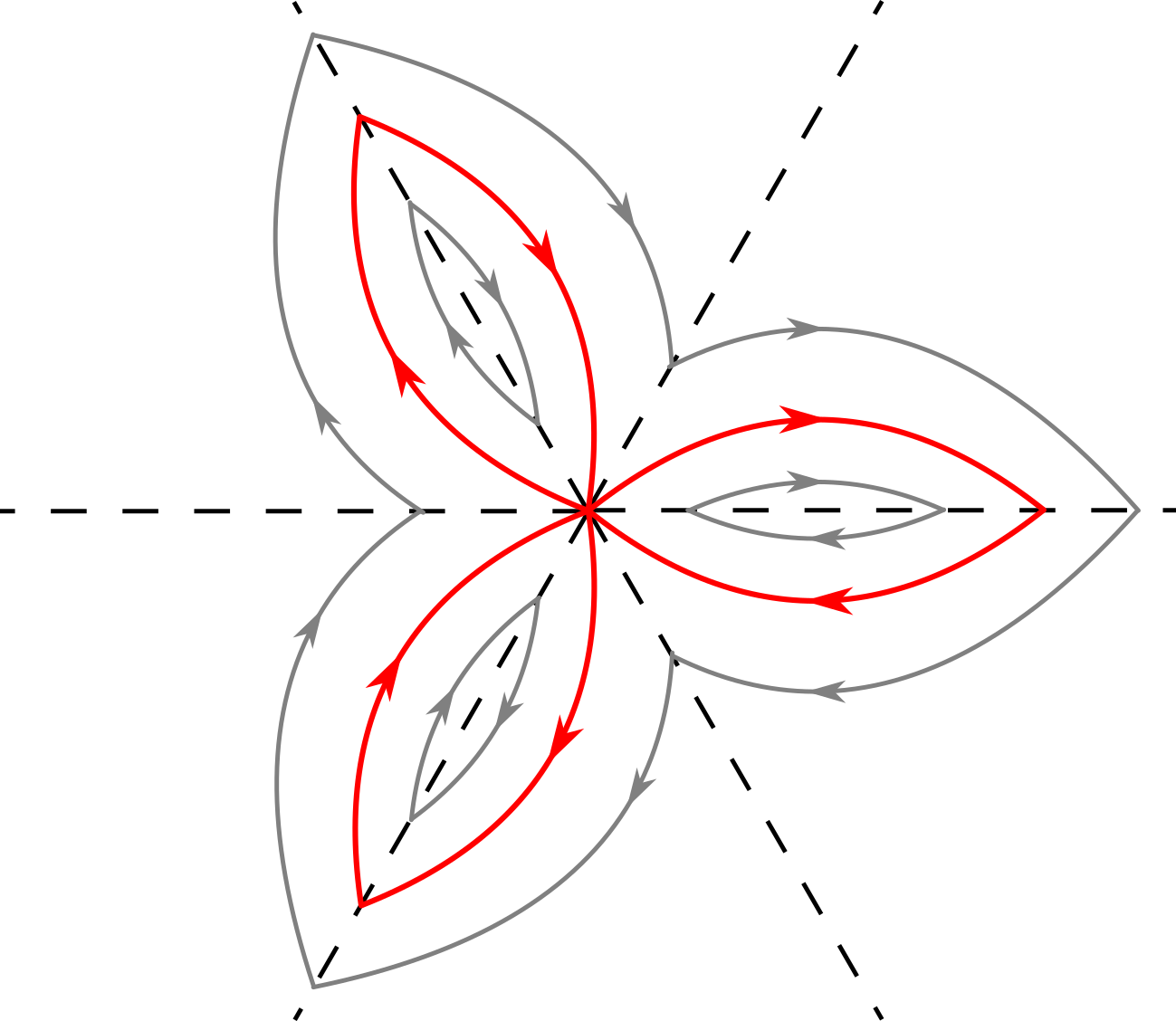}
			\put(89,40){\footnotesize $ I_{1} $} 
			\put(26,70){\footnotesize $ I_{2} $} 
			\put(35,8){\footnotesize $ I_{3} $}
		\end{overpic}
		\caption{Set highlighted in red is invariant for $ Z $}\label{rosacea-3}
	\end{figure}

	The union of the curves $ I_{1},I_{2},I_{3} $ highlighted in Figure \ref{rosacea-3} is invariant to the NSVF and a trajectory in it is the amalgamation of these three different arcs in every possible combination.
	
	Let $ \Omega^{*}=\{\gamma \in \Omega \,\,|\,\, \gamma(0)=0 \} $. Observe that the function $ F_{1} :\Omega^{*}\to\Omega^{*} $ is well defined since the arcs $ I_{j} $ can be adjusted so it has time duration $ 1 $.
	
	For any $ \gamma,\psi \in\Omega^{*} $ and for all $ i\in\Z $, we have that $ \gamma(i)=\psi(i)=0 $, and the integral 
	\[ \int_{i}^{i+1}|\gamma_{1}(t)-\gamma_{2}(t)|dt \]
	vanishes when the same arc is used at the same time in the construction of $ \gamma $ and $ \psi $ or it equals a constant $ \mu $, if different arcs occur (because of the symmetry of phase portrait this value $ \mu $ is the same for any pair $ I_{j},I_{k} $ with $ j\neq k $).
	
	Let $ \gamma_{j} $ represents a trajectory starting with $ I_{j} $, for $ j=1,2,3 $. Consider $ \varepsilon=2\mu $. The open ball $ B(\gamma_{j},\varepsilon,1) $ contains all trajectories that begins with one arc $ I_{j} $.
	Then $ \Omega^{*}\subset B(\gamma_{1},2\mu,1) \cup B(\gamma_{2},2\mu,1) \cup B(\gamma_{3},2\mu,1)$, which implies that $ S_{\rho}(F_{1} ,2\mu,1)\leqslant 3 $.
	
	Since $ \max\{\rho(\gamma,\psi), \gamma,\psi\in\Omega^{*}\}=3\mu $, it does not occur $ S_{\rho}(F_{1} ,2\mu,1)=1 $, then suppose by contradiction that $ S_{\rho}(F_{1} ,2\mu,1)=2 $ and let $ \psi_{1},\psi_{2}\in\Omega^{*} $ such that 
	\[ \Omega^{*}\subset B(\psi_{1},2\mu,1) \cup B(\psi_{2},2\mu,1). \] 
	Since there are three options $ I_{j} $, it is possible to construct $ \psi^{*} $ that does not coincide with either $ \psi_{1} $ or $ \psi_{2} $ at any given time. So, $ \rho(\psi^{*},\psi_{1})=\rho(\psi^{*},\psi_{2})=3\mu $ and, therefore,
	\[ \Omega^{*}\not\subset B(\psi_{1},2\mu,1) \cup B(\psi_{2},2\mu,1). \] 
	
	Thus $ S_{\rho}(F_{1} ,2\mu,1)= 3 $.
	
	Now fix $ \gamma_{11} $ a trajectory that begins with the concatenation of $ I_{1} $ twice in a row; $ \gamma_{12} $ for one starting with $ I_{1} $ and followed by $ I_{2} $ and so on. In general, fix $ \gamma_{jk} $ for a trajectory beginning with the concatenation of $ I_{j} $ followed by $ I_{k} $, for $ j,k=1,2,3 $.
	
	Given any $ \psi\in\Omega^{*} $, it coincides with some $ \gamma_{jk} $ in the interval $ [0,2] $. From which, we get that $ \rho(\gamma_{jk}, \psi)< 2\mu $ and $ \rho(F_{1}(\gamma_{jk}),F_{1}(\psi))<2\mu $, then $ \rho_{2}^{F_{1}}(\gamma_{jk},\psi)<2\mu $. Thus,
	\[ \Omega^{*}\subset\bigcup_{j,k=1,2,3} B(\gamma_{jk},2\mu,2), \]
	hence $ S_{\rho}(F_{1} ,2\mu,2)\leqslant 9 $.
	
	Once again, suppose for contradiction, that $ S_{\rho}(F_{1} ,2\mu,2)< 9 $. Then there exist at most $ 8 $ trajectories $ \psi_{l} $, such that 
	\begin{equation}\label{eq:e-capacidade-9}
		\Omega^{*}\subset\bigcup_{l} B(\psi_{l},2\mu,2).
	\end{equation}
	
	There exists a trajectory $ \psi^{*} $ that does not coincide with any $ \psi_{l} $ at the interval $ [-1,2] $. And, for any $ l $,
	\[\rho(\psi^{*},\psi_{l}) \geqslant \dfrac{1}{2}\int_{-1}^{0}|\psi^{*}(t)-\psi_{l}(t)|dt + \int_{0}^{1}|\psi^{*}(t)-\psi_{l}(t)|dt+ \dfrac{1}{2}\int_{1}^{2}|\psi^{*}(t)-\psi_{l}(t)|dt= 2\mu.
	\]
	
	Then $ \rho_{2}^{ F_{1}}(\psi^{*},\psi_{l})\geqslant 2\mu $, which contradicts (\ref{eq:e-capacidade-9}). Hence $ S_{\rho}(F_{1} ,2\mu,2) = 9 $.
	
	In general, for $ n $ fixed, there are $ 3^{n} $ fixed trajectories $ \gamma_{j_{0}\dots j_{n-1}} $, $ j_{i}\in\{1,2,3\} $, for which the open balls $ B_{F_{1}}(\gamma_{j_{0}\dots j_{n-1}},2\mu,n) $ cover $ \Omega^{*} $. Thus $ S_{\rho}(F_{1} ,2\mu, n)= 3^{n} $.
	
	Now consider $ \varepsilon=\dfrac{\mu}{2^{m}} $. For $ n=1 $ (metric $ \rho_{1}^{F_{1}} $), we choose trajectories $ \gamma_{j_{-m}\dots j_{m}} $ to be the centers of the open balls. And for a fixed $ n $, we choose trajectories $ \gamma_{j_{-m}\dots j_{m+n-1}} $. Then $ S_{\rho}(F_{1} ,\dfrac{\mu}{2^{m}}, n)= 3^{2m+n} $.
	
	So,
	\[ h(F_{1})=\lim_{m\to\infty}\lim_{n\to\infty}\dfrac{1}{n}\log S_{\rho}(F_{1} ,\dfrac{\mu}{2^{m}}, n)= \lim_{m\to\infty}\lim_{n\to\infty}\dfrac{1}{n}\log 3^{2m+n} = \log 3.  \]
	
	Now, for any $ \alpha\in\Z, \alpha\geqslant 2 $, we can construct a NSVF analogous to the one above (with $ 2\alpha $ regions) and, by the same calculations, conclude $ h(F_{1})=\log\alpha $.
\end{proof}

Consider the figure with three arcs (Figure \ref{rosacea-3}), in demonstration of Theorem \ref{thm-entropy-log-alpha}. If we consider $ \widetilde{Z} $ the restriction of $ Z $ to $ I_{1}\cup I_{2} $ (the union of only two arcs), then $ h(\widetilde{Z})=\log 2 $.

\begin{corollary}
	More generally, given $ Z $ with $ 2\alpha $ regions, as in proof of Theorem \ref{thm-entropy-log-alpha} (with $ \alpha $ closed arcs), we can consider $ \widetilde{Z} $ as the restriction of $ Z $ to any number $ \beta<\alpha $ of closed arcs, and then $ h(\widetilde{Z})=\log\beta $.
\end{corollary}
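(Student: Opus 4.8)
The plan is to re-run the counting argument from the proof of Theorem~\ref{thm-entropy-log-alpha} with $\alpha$ replaced by $\beta$, after first checking that the restricted system is an admissible object for Definition~\ref{definition entropy nsvf}. Fix a choice of $\beta$ of the $\alpha$ closed arcs, say $I_{j_{1}},\dots,I_{j_{\beta}}$, and let $K\subset M$ be their union; since each arc is a closed curve through the origin, $K$ is closed in $M$. Let $\widetilde{\Omega}\subset\Omega$ be the set of global trajectories of $Z$ whose image is contained in $K$, and set $\widetilde{\Omega}^{*}=\{\gamma\in\widetilde{\Omega}:\gamma(0)=0\}$. Because distinct arcs of the construction meet only at the origin, a trajectory with image in $K$ is precisely a concatenation of the arcs $I_{j_{1}},\dots,I_{j_{\beta}}$; this is the trajectory space of the restriction $\widetilde{Z}=Z\vert_{K}$. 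A time shift of such a concatenation is again one, so $\widetilde{\Omega}$ is $F_{1}$-invariant, and $F_{1}$ preserves $\widetilde{\Omega}^{*}$ because every arc has time duration $1$.

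Next I would verify that $\widetilde{\Omega}$ is closed in $(\Omega,\rho)$, exactly as in the proof of Theorem~\ref{thm-closed-invariant}: if $\gamma_{n}\to\gamma$ with $\gamma_{n}\in\widetilde{\Omega}$, then Proposition~\ref{lemma-converg-pontual} gives $\gamma_{n}(t)\to\gamma(t)$ for every $t$, hence the image of $\gamma$ lies in the closed set $K$, so $\gamma\in\widetilde{\Omega}$. Thus $\widetilde{\Omega}$ is a closed $F_{1}$-invariant subset of $\Omega$, $h(\widetilde{Z}):=h(F_{1}\vert_{\widetilde{\Omega}})$ is well defined, and Proposition~\ref{prop-closed-f-invariant} already yields $h(\widetilde{Z})\leqslant h(Z)=\log\alpha$.

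It remains to compute $h(\widetilde{Z})$ exactly, and here the estimates of Theorem~\ref{thm-entropy-log-alpha} apply verbatim. The only facts used there were: the finitely many arcs through the origin all have time duration $1$; by the rotational symmetry of the phase portrait, for any two \emph{distinct} arcs the contribution $\int_{i}^{i+1}|\gamma_{1}(t)-\gamma_{2}(t)|\,dt$ equals a fixed constant $\mu>0$ whenever $\gamma_{1},\gamma_{2}$ follow different arcs on $[i,i+1]$ and vanishes when they follow the same arc; and at the origin a trajectory may continue along any admissible arc. With $\beta$ admissible arcs, the same reasoning shows that a minimal $(n,\varepsilon)$-dense subset of $\widetilde{\Omega}^{*}$ for $\varepsilon=\mu/2^{m}$ is obtained by prescribing, in each of the $2m+n$ unit-time slots over $[-m,m+n-1]$, which of the $\beta$ arcs is traversed, so that $S_{\rho}(F_{1}\vert_{\widetilde{\Omega}},\mu/2^{m},n)=\beta^{2m+n}$ (the passage from $\widetilde{\Omega}^{*}$ to $\widetilde{\Omega}$ being handled as in the proof of Theorem~\ref{thm-entropy-log-alpha}). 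Letting $n\to\infty$ and then $m\to\infty$ gives $h(\widetilde{Z})=\lim_{m}\lim_{n}\tfrac{1}{n}\log\beta^{2m+n}=\log\beta$.

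The step requiring the most care is the identification of the trajectory space of $\widetilde{Z}$ with the set of $\beta$-arc concatenations together with the closedness of $\widetilde{\Omega}$, both of which rest on the geometric fact that the arcs of the rosette-type construction are pairwise disjoint away from the origin; beyond that one only needs to observe that the common value $\mu$ is unchanged when passing from the full family of arcs to the sub-family, which is immediate from the symmetry built into $Z$.
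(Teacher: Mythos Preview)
Your proposal is correct and follows exactly the route the paper intends: the paper gives no separate proof for this corollary, treating it as immediate from the computation in Theorem~\ref{thm-entropy-log-alpha} once one restricts to $\beta$ of the $\alpha$ arcs. Your write-up is in fact more careful than the paper's, since you explicitly verify that the restricted trajectory space $\widetilde{\Omega}$ is closed and $F_{1}$-invariant before re-running the $(n,\varepsilon)$-counting with $\beta$ symbols in place of $\alpha$.
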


\section{Infinite Topological Entropy}
In this section, we prove the existence of a NSVF of infinite topological entropy with an example, and next we give sufficient conditions for a NSVF $ Z $ to have $ h(Z)=\infty $.
 
\begin{example}\label{exemplo-feijao}
	Consider the NSVF:
	\begin{equation}\label{Eq Campo feijao}
	Z(x,y)=\left\{
	\begin{array}{l} 
	X(x,y)= (1, -2x) ,\quad $for$ \quad y \geq 0 \\ 
	Y(x,y)= (-2,-4x^3+2x),\quad $for$ \quad y \leq 0
	\end{array}
	\right. .
	\end{equation}
	
	\begin{figure}[h]
		\begin{overpic}[width=.65\linewidth]{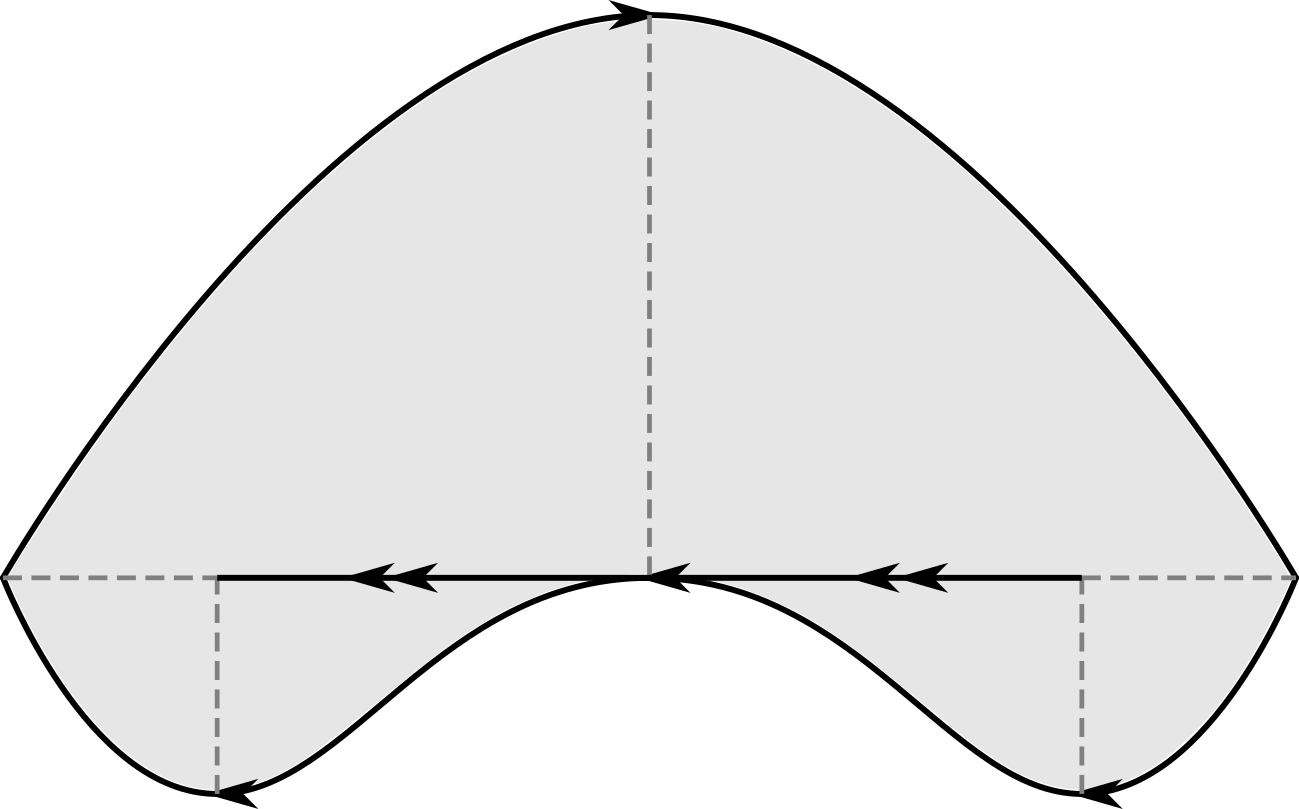}
			\put(50,13){\footnotesize $ p $} 
		\end{overpic}
		\caption{Set $ K $ is invariant for $ Z $}\label{figurafeijao1}
	\end{figure}
	
	It has a compact invariant set $K=\{(x,y)\in\R^2:-1\leqslant x\leqslant 1$ and $x^4/2-x^2/2\leqslant y\leqslant 1-x^2 \}$. Furthermore, $K$ is a chaotic non-trivial minimal set for $Z$ (see \cite{BCEminimal}).  As before, we will consider $ \Omega=\{\gamma $ global trajectory of $ Z\vert_{\Lambda}\} $ the set of all possible trajectories contained in the set $ \Lambda $ (see Figure \ref{figurafeijao1}).

	We will show that $ h(Z)=\infty $. 
	
	
	\begin{definition}\label{def:escape-point}
		We say that a trajectory $ \gamma $ escapes $ \overline{\Sigma^{e}} $ at a time $ s_{0} $ if $ \gamma(s_{0})\in\overline{\Sigma^{e}} $ and there is $ s_{1}>s_{0} $, such that $ \gamma(s)\notin\overline{\Sigma^{e}} $, for all $ s_{0}<s<s_{1} $. We may also add that $ \gamma $ escapes to $ \Sigma^{+} $ or $ \Sigma^{-} $, depending on which of these regions the points $ \gamma(s) $ are located in. Alternatively, we may say that $ \gamma(s_{0}) $ is a \emph{escape point of $\gamma$}. 
	\end{definition}
	
	Since every trajectory of $ \Omega $ intersects $ \Sigma^{e} $ in positive time, define the function $ \tau:\Omega\to \R_{+}, \tau(\gamma)=\tau_{\gamma} $, as the minimum $ t>0 $, for which $ \gamma $ escapes $ \overline{\Sigma^{e}} $, in other words, the time $ \tau_{\gamma} $ is the ``next escape time'' for $ \gamma $ (see Figure \ref{figurafeijao3}).
	
	\begin{figure}[h]
		\begin{overpic}[width=.65\linewidth]{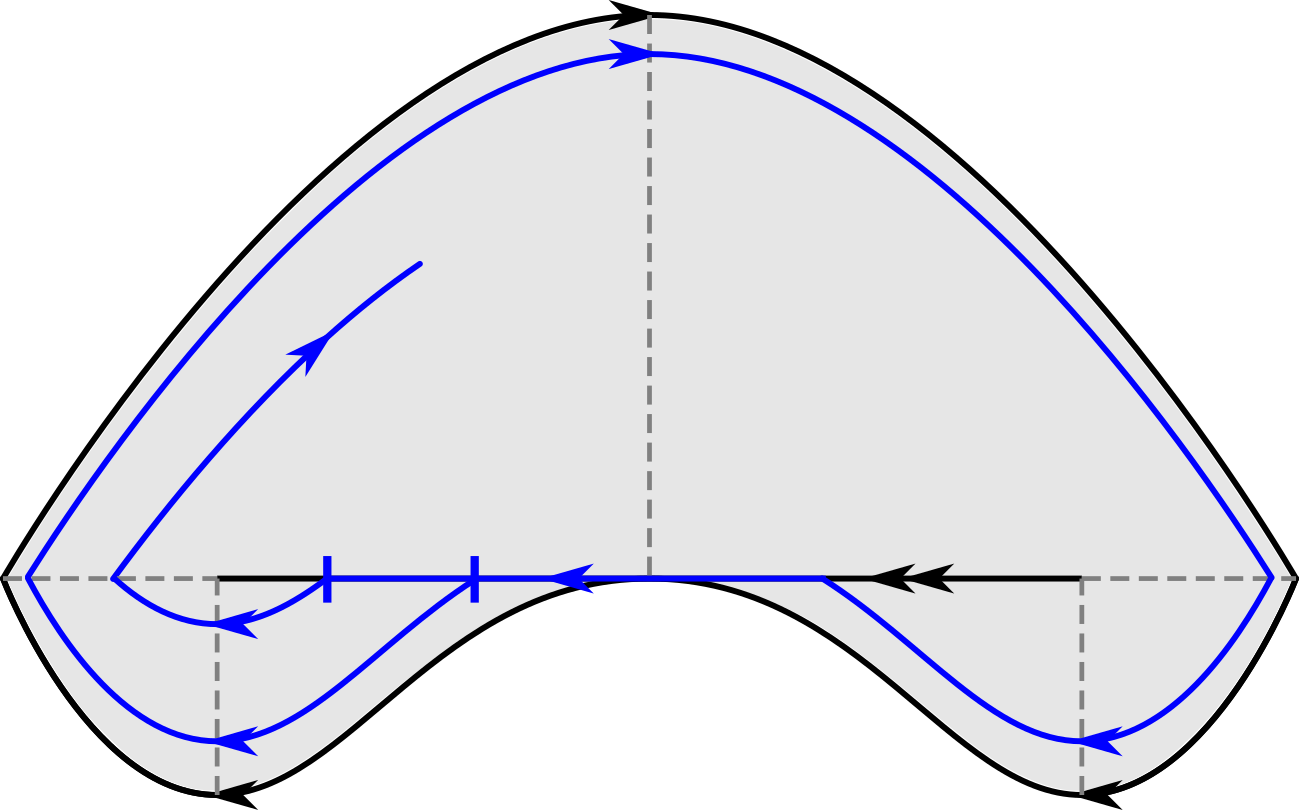}
			\put(50,13){\footnotesize $ p $} 
			\put(35,21){\footnotesize $ x $} 
			\put(25,21){\footnotesize $ y $} 
		\end{overpic}
		\caption{Arc of trajectory $ \gamma $, such that, $ \gamma(0)=x $, and $ \gamma(\tau_{\gamma})=y $. Both $ x $ and $ y $ are escape points of $\gamma$.
		} \label{figurafeijao3}
	\end{figure}
	
	
	Now, consider a subset $ R\subset K $, bounded by two distinct simple closed trajectories, and let 
	\[ S=\{\gamma\in\Omega \,\,|\,\, \forall t, \gamma(t)\in R \text{ and } \gamma \text{ escapes to } \Sigma^{-} \text{ at } t=0 \} \] 
	that is, $ S $ is the set of trajectories entirely contained in $ R $ that escapes at $ s_{0}=0 $ (see Figure \ref{figurafeijao2}).
	We may restrict $ R $ in order to obtain some positive integer $ k>1 $ for which we have $ k-1<\tau(\gamma)<k $ for every $ \gamma\in S $.
	And define the map:
	\begin{align*}
	P:S&\to S\\
	\gamma&\mapsto P(\gamma),
	\end{align*}
	where $ P(\gamma)(\ponto)=\gamma(\ponto+\tau_{\gamma}) $.

	\begin{figure}[h]
		\begin{overpic}[width=.65\linewidth]{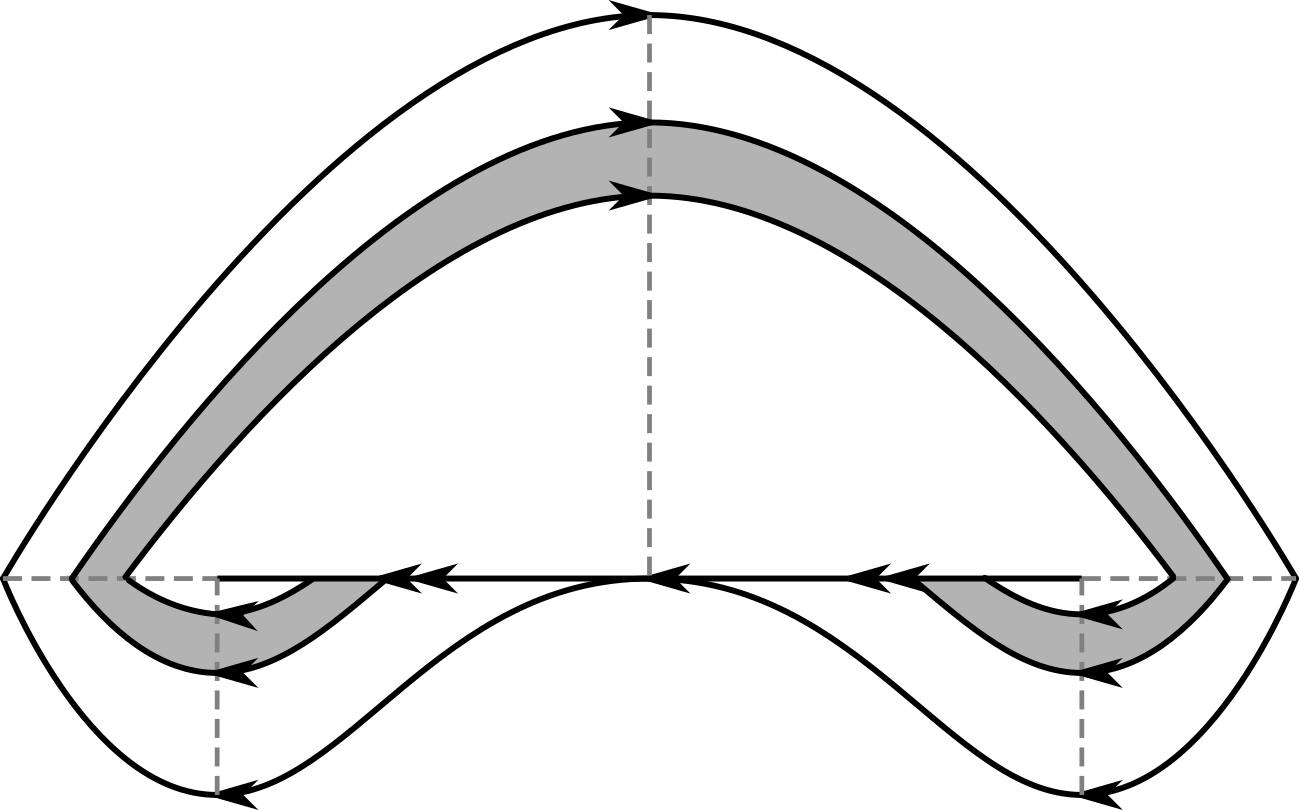}
			\put(50,13){\footnotesize $ p $} 
			\put(69,40){\footnotesize $ R $} 
		\end{overpic}
		\caption{Region $ R $}\label{figurafeijao2}
	\end{figure}
	




	In the following, we prove some properties of $ \tau $ and $ P $, that will be useful next.
	
	\begin{proposition}\label{lemma-tau-continua}
		The function $ \tau:S\to \R_{+} $ is continuous.
	\end{proposition}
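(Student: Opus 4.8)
The plan is to show that small changes in the initial trajectory $\gamma \in S$ produce small changes in the next escape time $\tau_\gamma$, exploiting that at the escape point the flow leaves $\overline{\Sigma^e}$ transversally in a controlled way. First I would fix $\gamma_0 \in S$ with escape time $\tau_0 = \tau_{\gamma_0}$, so $\gamma_0(0) \in \overline{\Sigma^e}$, the trajectory stays in $R$, and $\gamma_0(\tau_0)$ is the next point on $\overline{\Sigma^e}$. By the choice of $R$ we have $k-1 < \tau_0 < k$, so on a full interval $[0,\tau_0]$ the trajectory is a concrete concatenation of arcs of the flows $\varphi_X$, $\varphi_Y$ and the sliding flow $\varphi_{Z^s}$; in particular near $t = \tau_0$ the trajectory $\gamma_0$ is an arc of $\varphi_Y$ (it escaped to $\Sigma^-$ at $t=0$ and returns) hitting $\Sigma$ at $\gamma_0(\tau_0)$, and because we are inside $K$ away from the two-fold $p$, this hitting point lies in the interior of $\overline{\Sigma^e}$ or at a regular fold, where $Yf$ changes sign with nonzero $Y^2 f$, i.e. the contact is transversal/quadratic and isolated.

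Next I would use Proposition \ref{lemma-converg-pontual}: if $\gamma_n \to \gamma_0$ in $(\Omega,\rho)$ then $\gamma_n(t) \to \gamma_0(t)$ uniformly on compact time intervals, say on $[-1, k+1]$. Combined with Proposition \ref{lemma-epsilon-proximos}, once $\rho(\gamma_n,\gamma_0)$ is small the points $\gamma_n(t)$ are uniformly close to $\gamma_0(t)$ on that interval. Since $\gamma_0$ spends a definite amount of time strictly inside $\Sigma^-$ just before $\tau_0$ and then crosses $\Sigma$ transversally, for $n$ large the trajectory $\gamma_n$ must also be an arc of $\varphi_Y$ on that sub-interval (the region $\Sigma^-$ is open, so nearby trajectories satisfy the same defining ODE there) and, by continuous dependence of the flow $\varphi_Y$ on initial conditions together with the transversality of the crossing, $\gamma_n$ meets $\overline{\Sigma^e}$ at a time $\tau_{\gamma_n}$ close to $\tau_0$. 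The quantitative statement is a standard implicit-function / transversal-intersection argument: $f(\varphi_Y(t, q))$ has a simple zero (or a quadratic contact with controlled second derivative) at $t = \tau_0$, $q = \gamma_0$'s relevant base point, so the zero persists and varies continuously with $q$. I would also have to rule out that some \emph{earlier} escape appears for $\gamma_n$: but earlier escape points of $\gamma_0$ occur at times bounded away from $\tau_0$ inside $(0,\tau_0)$ only at the fixed structure of arcs, and by pointwise convergence $\gamma_n$ reproduces the same arc structure on $[0,\tau_0)$ for $n$ large, so $\tau_{\gamma_n}$ cannot jump downward.

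The main obstacle is handling the escape point itself carefully: $\overline{\Sigma^e}$ is a closed set (it includes its boundary fold points), so "escaping $\overline{\Sigma^e}$" is genuinely a question about the closed region, and continuity of $\tau$ could fail if $\gamma_0$ grazed a fold point tangentially or if the escape were non-transversal. Here the geometry of the example \eqref{Eq Campo feijao} saves us: inside $K\setminus\{p\}$ the only tangencies of $Y$ with $\Sigma$ are regular folds where $Y^2 f \neq 0$, the two-fold $p$ is invisible-invisible and is the unique singular point, and by shrinking $R$ as allowed we keep $S$ uniformly bounded away from $p$; thus every escape relevant to $S$ is across a point with a uniform transversality bound, and the implicit-function argument goes through with constants uniform on a neighborhood of $\gamma_0$ in $S$. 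Assembling these pieces: given $\varepsilon > 0$, choose the uniform-closeness scale on $[-1,k+1]$ small enough that the transversal-crossing estimate yields $|\tau_{\gamma_n} - \tau_0| < \varepsilon$; then apply Proposition \ref{lemma-epsilon-proximos} to get the corresponding $\delta > 0$ in the $\rho$-metric, which proves $\tau$ is continuous at $\gamma_0$. Since $\gamma_0 \in S$ was arbitrary, $\tau : S \to \R_+$ is continuous.
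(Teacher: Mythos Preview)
Your argument can be made to work, but it takes a substantially different route from the paper and carries some inaccuracies that would need to be fixed.

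The paper's proof is shorter and does not analyze the arc structure at all. It simply invokes Proposition~\ref{lemma-epsilon-proximos}: given $\varepsilon$ and $t_0>k$, there is $\delta$ with $\rho(\gamma_1,\gamma_2)<\delta \Rightarrow |\gamma_1(t)-\gamma_2(t)|\le\varepsilon$ for $|t|<t_0$. Assuming $\tau_1<\tau_2$, it lets $\bar t$ be the first time after $\tau_1$ at which the $\varepsilon$-ball around $\gamma_1(\tau_1+\bar t)$ is disjoint from $\overline{\Sigma^e}$; because $\gamma_2(\tau_2)\in\overline{\Sigma^e}$ and $|\gamma_1(\tau_2)-\gamma_2(\tau_2)|<\varepsilon$, one gets $\tau_1<\tau_2<\tau_1+\bar t$, and $\bar t$ depends continuously on $\varepsilon$. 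There is no transversality argument, no implicit function theorem, and no need to decide which of $X$, $Y$, $Z^s$ is driving the trajectory near the escape.

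Your approach instead tries to identify the flow near $\tau_0$ and to use continuous dependence of $\varphi_Y$ together with a transversal-crossing argument. Two points deserve care. First, the assertion that ``near $t=\tau_0$ the trajectory $\gamma_0$ is an arc of $\varphi_Y$'' and ``spends a definite amount of time strictly inside $\Sigma^-$ just before $\tau_0$'' is not correct in this example: the trajectory typically re-enters $\overline{\Sigma^e}$ and \emph{slides} along $Z^s$ before escaping again, so just before $\tau_0$ it is on $\overline{\Sigma^e}$, not in $\Sigma^-$ following $Y$. Second, ``continuous dependence on initial conditions'' does not apply directly to elements of $\Omega$, since trajectories involve choices at escape points and are not solutions of a single ODE; you would need to argue separately that nearby trajectories in $S$ reproduce the same sliding segment and the same escape side, which amounts to reproving pointwise closeness piece by piece. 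These gaps are fixable, but the paper's argument avoids them altogether by using only the pointwise estimate from Proposition~\ref{lemma-epsilon-proximos} and the uniform bound on the vector field.
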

	\begin{proof}
		
		%
		%
		%
		%
		%
		%

		Let $ \gamma_{1},\gamma_{2}\in S, \varepsilon>0 $ and consider $ t_{0}>k $ large enough. For simplicity, let $ \tau_{1}=\tau_{\gamma_{1}} $ and $ \tau_{2}=\tau_{\gamma_{2}} $ and assume $ \tau_{1}<\tau_{2} $. 
		
		By Proposition \ref{lemma-epsilon-proximos}, there is $ \delta>0 $ such that, for any $ \gamma_{1},\gamma_{2}\in S $, if $ \rho(\gamma_{1},\gamma_{2})<\delta $, then for every $ |t|<t_{0} $, $ |\gamma_{1}(t)-\gamma_{2}(t)|\leqslant \varepsilon $. This implies that $ \gamma_{1}(\tau_{1}) $ and $ \gamma_{2}(\tau_{1}) $ are $ \varepsilon $ close to each other and also $ \gamma_{1}(\tau_{2}) $ and $ \gamma_{2}(\tau_{2}) $.
		
		Now, there exists $ \overline{t}>0 $, that is the minimum time for which $ B(\gamma_{1}(\tau_{1}+\overline{t}),\varepsilon)\cap \overline{\Sigma^{e}}=\emptyset $. That is, the time required to $ \gamma_{1} $ distance $ \varepsilon $ from $ \Sigma^{e} $. It clearly depends continuously of $ \varepsilon $.
		
		Since $ |\gamma_{1}(\tau_{2})-\gamma_{2}(\tau_{2})|<\varepsilon $, and $ \gamma_{2}(\tau_{2})\in\Sigma^{e} $, we must have $ \tau_{1}<\tau_{2}<\tau_{1}+\overline{t} $.
		
		Intuitively, from the moment $ \gamma_{1} $ escapes $ \Sigma^{e} $, there is little time to $ \gamma_{2} $ escape it too, for if it does not, they would distance themselves.

		%
		%
		%
		%
		%
		%
	\end{proof}

	\begin{proposition}\label{prop:p-continuous}
		The map $ P:S\to S $ as defined above is continuous.
	\end{proposition}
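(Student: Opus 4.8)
The plan is to prove continuity of $P$ at an arbitrary $\gamma_0\in S$ by combining the continuity of the escape-time function $\tau$ (Proposition \ref{lemma-tau-continua}) with the already-established facts about the metric $\rho$, namely Proposition \ref{lemma-epsilon-proximos} (closeness in $\rho$ forces uniform closeness on compact time intervals) and the uniform boundedness of the speeds $|\dot\gamma|$ coming from the bounded generating vector fields. Fix $\varepsilon>0$. Since $P(\gamma)(\cdot)=\gamma(\cdot+\tau_\gamma)$, the natural route is to estimate $\rho(P(\gamma),P(\gamma_0))$ by splitting it, for a large cutoff $t_0>k$, into the tail $|t|\ge t_0$ (which contributes at most $\sum_{|i|\ge t_0}2^{-|i|}\cdot\mathrm{diam}(K)$, made small by choosing $t_0$ large, independently of $\gamma$) and the central block $|t|<t_0$, where we must control $|\gamma(t+\tau_\gamma)-\gamma_0(t+\tau_{\gamma_0})|$.

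For the central block I would use the triangle inequality
\[
|\gamma(t+\tau_\gamma)-\gamma_0(t+\tau_{\gamma_0})|\le |\gamma(t+\tau_\gamma)-\gamma_0(t+\tau_\gamma)|+|\gamma_0(t+\tau_\gamma)-\gamma_0(t+\tau_{\gamma_0})|.
\]
The second term is bounded by $L\,|\tau_\gamma-\tau_{\gamma_0}|$, where $L$ is the uniform speed bound, and this is made small by Proposition \ref{lemma-tau-continua}: choosing $\rho(\gamma,\gamma_0)$ small forces $|\tau_\gamma-\tau_{\gamma_0}|$ small. For the first term, note that $|t+\tau_\gamma|<t_0+k$, so it suffices to apply Proposition \ref{lemma-epsilon-proximos} with the time bound $t_0+k$: there is $\delta>0$ so that $\rho(\gamma,\gamma_0)<\delta$ implies $|\gamma(s)-\gamma_0(s)|\le\varepsilon$ for all $|s|<t_0+k$. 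Feeding these two bounds back into the definition of $\rho$ over the finitely many indices $i$ with $|i|<t_0$ yields $\rho(P(\gamma),P(\gamma_0))$ small, as desired. Finally one checks $P(\gamma)\in S$: since $\tau_\gamma$ is, by Definition \ref{def:escape-point}, the next escape time to $\Sigma^-$, the shifted trajectory $P(\gamma)$ again lies entirely in $R$ and escapes to $\Sigma^-$ at time $0$, so $P$ indeed maps $S$ into $S$.

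The main obstacle is the term $|\gamma_0(t+\tau_\gamma)-\gamma_0(t+\tau_{\gamma_0})|$: controlling it requires that a single trajectory $\gamma_0$ be Lipschitz (or at least uniformly continuous) in its time parameter with a constant independent of the trajectory, which is exactly the uniform boundedness of the Filippov speeds — valid on the sliding region as well, since the sliding vector field (\ref{expfilipov}) is a convex combination of $X$ and $Y$ and hence bounded by $\max(\|X\|,\|Y\|)$ on $K$. One must also take a little care that the choice of $\delta$ in the last step depends on $t_0$, but $t_0$ was fixed \emph{before} $\delta$ (depending only on $\varepsilon$ and $\mathrm{diam}(K)$), so there is no circularity. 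With these points addressed the estimate closes routinely.
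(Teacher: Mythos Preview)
Your proposal is correct and follows essentially the same route as the paper: split $\rho(P(\gamma),P(\gamma_0))$ into a tail controlled by $\mathrm{diam}(K)$ and a central block on $|t|<t_0$, then use the same triangle-inequality decomposition $|\gamma(t+\tau_\gamma)-\gamma_0(t+\tau_\gamma)|+|\gamma_0(t+\tau_\gamma)-\gamma_0(t+\tau_{\gamma_0})|$, bounding the first summand via Proposition~\ref{lemma-epsilon-proximos} (the paper phrases this through its sequential consequence, Proposition~\ref{lemma-converg-pontual}) and the second via the uniform speed bound together with Proposition~\ref{lemma-tau-continua}. Your explicit justification of the Lipschitz constant on the sliding segment and the check that $P(S)\subset S$ are small additions the paper leaves implicit, but the core argument is the same.
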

	\begin{proof}
		Let $ \gamma\in S $ and $ \gamma_{n}\to\gamma $. We will show that $ P(\gamma_{n})\to P(\gamma) $.

		Let $ \varepsilon>0 $ then there is $ t_{0}>0 $ such that \[ \sum_{|i|\geqslant t_{0}}\frac{1}{2^{|i|}}\int_{i}^{i+1}| P(\gamma_{n})(t)- P(\gamma)(t)|dt <\frac{\varepsilon}{2}. \]
		
		Let $ \varepsilon_{0}>0 $ such that
		\[ \forall |t|<t_{0}, |P(\gamma_{n})(t)-P(\gamma)(t)|<\varepsilon_{0} \Rightarrow \sum_{|i|< t_{0}}\frac{1}{2^{|i|}}\int_{i}^{i+1}| P(\gamma_{n})(t)- P(\gamma)(t)|dt <\frac{\varepsilon}{2}. \]
		
		Now, by Propositions \ref{lemma-converg-pontual} and \ref{lemma-tau-continua}, and considering the fact that $ \gamma $ is piecewise differentiable with bounded derivatives, we have that, there is $ N $ large enough such that if $ n>N $:
		\begin{align*}
		|P(\gamma_{n})(t)-P(\gamma)(t)|&=|\gamma_{n}(t+\tau_{\gamma_{n}})-\gamma(t+\tau_{\gamma})|\leqslant\\
		&\leqslant  |\gamma_{n}(t+\tau_{\gamma_{n}})-\gamma(t+\tau_{\gamma_{n}})|+ |\gamma(t+\tau_{\gamma_{n}})-\gamma(t+\tau_{\gamma})|<\\
		&<\frac{\varepsilon_{0}}{2}+M |\tau_{\gamma_{n}}-\tau_{\gamma}|<\varepsilon_{0}.
		\end{align*}
		
		Hence, if $ n>N $, $ \rho(P(\gamma_{n}),P(\gamma))<\varepsilon $.
		
	\end{proof}

	\begin{proposition}\label{prop:p-entropia-infinita}
		The map $ P $ has infinite topological entropy.
	\end{proposition}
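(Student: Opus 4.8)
The plan is to show that $h(P)\geqslant\log m$ for every integer $m\geqslant 2$; since $m$ is arbitrary this already forces $h(P)=\infty$. For each such $m$ I would produce a closed $P$-invariant subset $\Lambda_{m}\subset S$ together with a topological semi-conjugacy $H$ from $P|_{\Lambda_{m}}$ onto the full shift $\sigma_{m}\colon\{1,\dots,m\}^{\Z}\to\{1,\dots,m\}^{\Z}$. Granting this, Proposition~\ref{invariant conjugacy} gives $h(P|_{\Lambda_{m}})\geqslant h(\sigma_{m})=\log m$, and Proposition~\ref{prop-closed-f-invariant} gives $h(P)\geqslant h(P|_{\Lambda_{m}})$, whence $h(P)\geqslant\log m$. (Equivalently one could estimate $(n,\varepsilon)$-spanning sets of $P$ directly, as in the proof of Theorem~\ref{thm-entropy-log-alpha}, but the semi-conjugacy route is shorter.)

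The construction of $\Lambda_{m}$ rests on the non-uniqueness in the escaping region. The set $J:=\overline{\Sigma^{e}}\cap R$ is an arc, and by the Filippov convention a trajectory reaching $J$ may slide along it and then escape to $\Sigma^{-}$ at \emph{any} point visited while sliding. Moreover, since $R$ is the annular region trapped between two simple closed trajectories, the loop a trajectory of $S$ makes through $R$ (escape to $\Sigma^{-}$, flow by $Y$, cross $\Sigma$, flow by $X$, and so on) brings it back to $J$ at a well-defined return point which, after shrinking $R$ if necessary, can be assumed to lie behind a fixed subarc $J_{0}\subset J$, \emph{uniformly} over $\gamma\in S$. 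I would then fix $m$ distinct, well-separated points $p_{1},\dots,p_{m}\in J_{0}$ and let $\Lambda_{m}$ be the set of all $\gamma\in S$ whose escape points all belong to $\{p_{1},\dots,p_{m}\}$. This set is $P$-invariant because $P(\gamma)(\cdot)=\gamma(\cdot+\tau_{\gamma})$ merely re-indexes the bi-infinite sequence of escape points of $\gamma$, and it is closed by the same argument used in the proof of Theorem~\ref{thm-closed-invariant}, using that escape points vary continuously (Propositions~\ref{lemma-converg-pontual} and \ref{lemma-tau-continua}) together with the isolatedness of the $p_{i}$.

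The semi-conjugacy is the coding map $H(\gamma)=(a_{i})_{i\in\Z}$, where $p_{a_{i}}$ is the $i$-th escape point of $\gamma$, with the convention that $\gamma(0)$ is the $0$-th one. Surjectivity of $H$ is a realization statement: given any $(a_{i})\in\{1,\dots,m\}^{\Z}$ one builds a trajectory by concatenating loops through $R$ and, at each return to $J$, sliding to the prescribed point $p_{a_{i+1}}$ before escaping to $\Sigma^{-}$ again — legitimate precisely because every return point lies behind every $p_{j}$ — and the same prescription run backwards completes a global trajectory belonging to $S$. Continuity of $H$ follows from continuity of $\tau$ and pointwise convergence on bounded time windows (Propositions~\ref{lemma-tau-continua} and \ref{lemma-converg-pontual}): these force $\gamma_{n}$ and $\gamma$ to share escape points over any fixed finite index range once $n$ is large, since the $p_{i}$ are isolated. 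Finally $H\circ P=\sigma_{m}\circ H$ is immediate, because passing from $\gamma$ to $\gamma(\cdot+\tau_{\gamma})$ advances the escape itinerary by one slot. As $h(\sigma_{m})=\log m$, the chain of inequalities above gives $h(P)\geqslant\log m$ for all $m$, hence $h(P)=\infty$.

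The main obstacle is the uniform branching claim: that the return point can be forced, by an appropriate choice of $R$ and of the subarc $J_{0}$, to lie behind all of $p_{1},\dots,p_{m}$ for \emph{every} $\gamma\in S$ rather than merely for the two boundary orbits, so that the slide-then-escape mechanism genuinely permits a free, history-independent choice of the next escape point at each step. This is where one uses the planar annular structure of $R$ — in particular the monotonicity of the first-return map to a cross-section of the region between two closed orbits — and the freedom to take $R$ thin; one must also check that the normalisation $k-1<\tau(\gamma)<k$ survives the shrinking of $R$. Everything else is bookkeeping with the metric $\rho$ that has essentially been carried out already in Propositions~\ref{lemma-epsilon-proximos}--\ref{prop:p-continuous}.
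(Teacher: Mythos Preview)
Your argument is correct and rests on the same underlying mechanism as the paper's --- coding trajectories by their sequence of escape points in $\overline{\Sigma^{e}}\cap R$ and pushing $P$ onto a full shift --- but the two implementations differ. The paper does not discretize: it fixes a linear parametrization $\theta:\overline{\Sigma^{e}}\cap R\to[0,1]$, defines a single itinerary map $s:S\to[0,1]^{\Z}$ by $s_{j}(\gamma)=\theta(P^{j}(\gamma)(0))$, checks $\sigma\circ s=s\circ P$ and the continuity of $s$, and invokes Proposition~\ref{invariant conjugacy} once, using that the shift on $[0,1]^{\Z}$ already has infinite entropy. You instead pick $m$ points, build a closed $P$-invariant $\Lambda_{m}$, semi-conjugate onto the finite shift $\{1,\dots,m\}^{\Z}$, and let $m\to\infty$. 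Your route costs you the extra work of verifying that $\Lambda_{m}$ is closed and of passing to the limit in $m$, but it buys transparency: you single out the ``uniform branching claim'' --- that every return to $J$ allows a free choice of the next escape point --- which is exactly the surjectivity of the coding map, a point the paper's proof leaves implicit. In short, the paper's version is shorter and reaches $h(P)=\infty$ in one stroke, while yours makes the one genuinely geometric ingredient visible.
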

	\begin{proof}
		Consider a linear parametrization $\theta: \overline{\Sigma^{e}}\cap R \to [0,1] $. Then construct a function itinerary $ s:S\to [0,1]^{\Z} $ given by: $s(\gamma)=\left(s_{j}(\gamma)\right)_{j\in\Z} $, such that $ s_{0}(\gamma)=\theta(\gamma(0)) $, and $ s_{j}(\gamma)=\theta(P^{j}(\gamma)(0)) $.
		
		Now, let $ (b_{j})_{j\in\Z}=\sigma\circ s (\gamma) $, and $ (a_{j})_{j\in\Z}=s\circ P (\gamma) $, where $ {\sigma:[0,1]^{\Z}\to [0,1]^{\Z} }$ is the shift map. Then:
		\[ b_{j}=s_{j+1}(\gamma)=\theta(P^{j+1}(\gamma)(0))=\theta(P^{j}(P(\gamma))(0))=s_{j}(P(\gamma))=a_{j}. \]
		
		Hence $ \sigma\circ s=s\circ P $.
		\begin{figure}[H]
			\begin{center}
				\begin{tikzpicture}
				\node (A) {$S$};
				\node (B) [right of=A] {$S$};
				\node (C) [below of=A] {$[0,1]^{\Z}$};
				\node (D) [below of=B] {$[0,1]^{\Z}$};
				\large\draw[->] (A) to node {\mbox{{\footnotesize $P$}}} (B);
				\large\draw[->] (C) to node {\mbox{{\footnotesize $\sigma $}}} (D);
				\large\draw[->] (A) to node {\mbox{{\footnotesize $ s $}}} (C);
				\large\draw[->] (B) to node {\mbox{{\footnotesize $ s $}}} (D);
				\end{tikzpicture}
			\end{center}
		\end{figure}
		
		The map $ s $ is continuous. In fact,
		\[ |s_{j}(\gamma_{1})-s_{j}(\gamma_{2})|=|\theta(P^{j}(\gamma_{1})(0))-\theta(P^{j}(\gamma_{2})(0))| = \theta'|P^{j}(\gamma_{1})(0)-P^{j}(\gamma_{2})(0)|. \]
		
		Given $ \varepsilon>0 $, there exists $ \eta>0 $ and $ N_{0}\in\N $, such that, if $ |s_{j}(\gamma_{1})-s_{j}(\gamma_{2})|<\eta $ for all $ -N_{0}<j<N_{0} $, then $ d(s(\gamma_{1}),s(\gamma_{2}))<\varepsilon $.
		
		But, since $ P $ is continuous, there exists $ \delta>0 $, such that $ \theta'|P^{j}(\gamma_{1})(0)-P^{j}(\gamma_{2})(0)|<\eta $, for all $ -N_{0}<j<N_{0} $. Hence $ s $ is continuous.
		
		By Proposition \ref{invariant conjugacy}, $ h(P)=\infty $.
	\end{proof}

	\begin{theorem}\label{thm:feijao-entropia-infinita}
		The NSVF $ Z $ presented in (\ref{Eq Campo feijao}) has infinite topological entropy.
	\end{theorem}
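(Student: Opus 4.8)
The plan is to combine the three propositions just established about the return map $P$ with the general machinery from Sections~\ref{sec:entropy} and the beginning of Section~4. The key observation is that $P$ is (up to the continuous reparametrization by $\tau$) an iterate of the time-one map $F_1$ restricted to an invariant subsystem, so its infinite entropy forces infinite entropy for $F_1$, hence for $Z$.

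First I would make precise the relationship between $P$ and $F_1$. The set $S\subset\Omega$ of trajectories entirely contained in $R$ is closed and $F_1$-invariant in the larger space $\Omega$ of trajectories in $\Lambda$ (closedness follows exactly as in the proof of Theorem~\ref{thm-closed-invariant}, using that $R$ is closed). By Theorem~\ref{thm-closed-invariant} it then suffices to show $h(F_1|_S)=\infty$. Next, since $k-1<\tau(\gamma)<k$ for every $\gamma\in S$ by the choice of $R$, the map $P$ is sandwiched between finitely many iterates of $F_1$: more precisely one shows that a $(n,\varepsilon)$-spanning set for $P$ gives rise to a $(kn,\varepsilon')$-spanning-type set for $F_1|_S$, with the continuity of $\tau$ (Proposition~\ref{lemma-tau-continua}) and the uniform bound on the vector fields controlling the discrepancy between $P^j(\gamma)$ and $F_1^{m}(\gamma)$ for the appropriate $m$ near $\sum_{i<j}\tau$. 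The cleanest way to package this is: because $k-1 < \tau < k$, we have $h(P)\le k\,h(F_1|_S)$, which is the inequality we need. (One could alternatively invoke a suspension/special-flow comparison, but the crude bound suffices since we only want $h(F_1|_S)=\infty$.)

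With that inequality in hand the argument is immediate: Proposition~\ref{prop:p-entropia-infinita} gives $h(P)=\infty$, so $k\,h(F_1|_S)=\infty$, forcing $h(F_1|_S)=\infty$; then Theorem~\ref{thm-closed-invariant} (applied to the closed $Z$-invariant subset of $M$ whose trajectory-space is $S$, namely $R$) yields $h(Z)=h(F_1)\ge h(F_1|_S)=\infty$. Hence $h(Z)=\infty$, as claimed.

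The main obstacle is the middle step: carefully justifying the inequality $h(P)\le k\,h(F_1|_S)$, i.e.\ that the return-time reparametrization does not destroy the entropy estimate. The subtlety is that $P(\gamma)=F_{\tau_\gamma}(\gamma)$ uses a \emph{variable} time, so $P^n(\gamma)=F_{T_n(\gamma)}(\gamma)$ with $T_n(\gamma)=\sum_{j=0}^{n-1}\tau_{P^j\gamma}\in((k-1)n,kn)$; to compare $P$-orbits with $F_1$-orbits one must show that closeness of $\gamma_1,\gamma_2$ in the $d^{F_1}_{kn}$ metric implies closeness of $T_n(\gamma_1),T_n(\gamma_2)$ (via Proposition~\ref{lemma-tau-continua}) and hence closeness in the $d^{P}_{n}$ metric, so that an $(kn,\varepsilon)$-spanning set for $F_1|_S$ is also (essentially) an $(n,\varepsilon')$-spanning set for $P$; counting then gives $S_\rho(P,\varepsilon',n)\le S_\rho(F_1|_S,\varepsilon,kn)$ and taking $\limsup\frac1n\log$ yields $h(P)\le k\,h(F_1|_S)$. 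The uniform boundedness of $X$ and $Y$ (already exploited throughout Section~4) is exactly what makes these estimates go through.
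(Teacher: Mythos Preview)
Your strategy---transfer the infinite entropy of $P$ to $F_1$ via the bounded return time $\tau\in(k-1,k)$---is sound in spirit, but it differs from the paper's route and leaves real work undone precisely at the point you yourself flag as the obstacle.

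Two concrete issues. First, the set $S$ in the paper consists of those trajectories in $R$ that \emph{escape to $\Sigma^-$ at $t=0$}; this is \emph{not} $F_1$-invariant (if $\gamma$ escapes at time $0$, then $F_1\gamma$ escapes at time $-1$, typically not at $0$). The $F_1$-invariant set you seem to want is the larger collection $\Omega_R$ of all trajectories lying in $R$, but then you must say how $P$---which the paper defines and analyses on the smaller $S$---relates to $F_1|_{\Omega_R}$. Second, your spanning-set comparison implicitly bounds $|T_n(\gamma_1)-T_n(\gamma_2)|$ by summing $n$ individual errors coming from Proposition~\ref{lemma-tau-continua}; this gives only $O(n\varepsilon)$, which does \emph{not} yield $(n,\varepsilon')$-closeness for $P$. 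The argument can be rescued by showing that pointwise $\varepsilon$-closeness of $\gamma_1,\gamma_2$ on $[0,kn]$ forces their $i$-th escape times to agree within a fixed $\delta(\varepsilon)$ \emph{independent of $i$} (escaping is a purely local condition on the trajectory), but that is an additional lemma, not a consequence of the continuity of $\tau$ alone.

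The paper sidesteps both difficulties by proving an \emph{exact} intertwining identity rather than an approximate spanning-set comparison: for every $\gamma\in S$,
\[
P\circ F_1^{\,k}(\gamma)\;=\;P\circ P(\gamma),
\]
because both sides shift $\gamma$ to its second escape time after $0$ (using $k-1<\tau_\gamma<k$ together with $\tau_{P\gamma}+\tau_\gamma>2(k-1)\ge k$). Thus $P$ itself, being continuous and onto, plays the role of a semi-conjugacy from $F_1^{\,k}$ to $P$, and Proposition~\ref{invariant conjugacy} delivers $h(F_1^{\,k})\ge h(P)=\infty$ at once---no spanning-set bookkeeping and no control of accumulated return times. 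What the paper's route buys is exactly the elimination of the drift estimate that your approach must confront head-on; what your approach would buy, if completed, is a more quantitative inequality $h(P)\le k\,h(F_1)$ valid even when $h(P)<\infty$.
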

	\begin{proof}
		In order to prove this, we show that the following diagram commutes: 
		\begin{figure}[H]
			\begin{center}
				\begin{tikzpicture}
				\node (A) {$ S $};
				\node (B) [right of=A] {$ F_{1}^{k}(S) $};
				\node (C) [below of=A] {$ S $};
				\node (D) [below of=B] {$ S $};
				\large\draw[->] (A) to node {\mbox{{\footnotesize $F_{1}^{k}$}}} (B);
				\large\draw[->] (C) to node {\mbox{{\footnotesize $ P $}}} (D);
				\large\draw[->] (A) to node {\mbox{{\footnotesize $ P $}}} (C);
				\large\draw[->] (B) to node {\mbox{{\footnotesize $ P $}}} (D);
				\end{tikzpicture}
			\end{center}
		\end{figure}
		where $ k>1$ is the integer given above, such that $ k-1<\tau_{\gamma}<k $, for all $ \gamma\in S $. 
		
		Given $ \gamma\in S $, and $ t $, on one side we have	
		\[ P\circ P(\gamma)(t)=P(\gamma)(t+\tau_{P(\gamma)})=\gamma(t+\tau_{P(\gamma)}+\tau_{\gamma}) \]
		and on the other side
		\[ P\circ F_{1}^{k}(\gamma)(t)=F_{1}^{k}(\gamma)(t+\tau_{F_{1}^{k}(\gamma)})=\gamma(t+\tau_{F_{1}^{k}(\gamma)}+k). \]
		
		Then it is enough to show that, for any $ \gamma\in S $:
		\begin{equation}\label{eqn:equal-times}
		\tau_{P(\gamma)}+\tau_{\gamma}= k+ \tau_{F_{1}^{k}(\gamma)}. 
		\end{equation}

		In fact, on the one hand, $ \tau_{P(\gamma)}+\tau_{\gamma} $ is exactly the time at which $ \gamma $ escapes $ \overline{\Sigma^{e}} $ for the second time. And since $ k-1<\tau_{\gamma}<k $, we have $ k\leqslant 2k-2 < \tau_{P(\gamma)}+\tau_{\gamma}<2k. $
		
		That is, 
		\begin{equation}\label{eqn:left-side}
		\tau_{P(\gamma)}+\tau_{\gamma}= \min\{s>k\,\,|\,\, \gamma(s) \text{ is a escape point of } \gamma \}.
		\end{equation}
		On the other hand:
		\begin{align*}
		k+ \tau_{F_{1}^{k}(\gamma)}&= k+ \min\{t>0\,\,|\,\, F_{1}^{k}(\gamma)(t) \text{ is a escape point of } F_{1}^{k}(\gamma) \}=\\
		&= k+ \min\{t>0\,\,|\,\, \gamma(t+k) \text{ is a escape point of } \gamma \}=\\
		&= k+ \min\{s-k>0\,\,|\,\, \gamma(s) \text{ is a escape point of } \gamma \}=\\
		&= \min\{s>k\,\,|\,\, \gamma(s) \text{ is a escape point of } \gamma \}.
		\end{align*}
		
		From the above equation and (\ref{eqn:left-side}), we get (\ref{eqn:equal-times}).

		Since $ P $ is onto and continuous it works as a semi-conjugacy between $ F_{1}^{k} $ and $ P $, then by Proposition \ref{invariant conjugacy}, $ h(F_{1})\geqslant h(P)=\infty $. Thus $ h(Z)=\infty $.
		
	\end{proof}
	
\end{example}

In the following theorem, we give sufficient conditions for a NSVF $ Z $ to have infinity topological entropy, but first one lemma that will be useful to complete the theorem.

\begin{lemma}\label{lemma:rescaling-fields-same-entropy}
	Let $ Z $ and $ \widetilde{Z} $ two NSVF defined over a compact invariant set $ K\subset \R^2 $, such that $ Z=c \cdot \widetilde{Z} $, where $ c>0 $ is an integer. If $ h(\widetilde{Z})=\infty $, then $ h(Z)=\infty $.
\end{lemma}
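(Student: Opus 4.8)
The plan is to exhibit a topological conjugacy (or at least a semi-conjugacy in the correct direction) between the time-one maps $F_1$ and $\widetilde F_1$ acting on the respective orbit spaces $\Omega$ and $\widetilde\Omega$, and then invoke Proposition \ref{invariant conjugacy}. First I would observe that multiplying a vector field by a positive constant $c$ only reparametrizes time: if $\widetilde\gamma(t)$ is a global trajectory of $\widetilde Z$, then $\gamma(t) \defeq \widetilde\gamma(ct)$ is a global trajectory of $Z$, since this rescaling respects each of the local trajectory conventions in Definition \ref{definicao trajetorias} — on $\Sigma^{\pm}$ the orbits of $cX$ are those of $X$ with time scaled by $1/c$; the Filippov sliding field $Z^s$ in \eqref{expfilipov} is also multiplied by $c$ because the numerator and denominator each scale by $c^2$ and $c$ respectively; the crossing, sliding and escaping regions are unchanged because the sign of $Xf\cdot Yf$ is unchanged; and singular tangencies remain fixed points. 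Hence the correspondence $\widetilde\gamma \mapsto \widetilde\gamma(c\,\ponto)$ is a bijection $\Psi : \widetilde\Omega \to \Omega$.

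Next I would check that $\Psi$ is a homeomorphism for the metrics $\rho$ on the two spaces. Continuity in both directions follows by the same style of estimate used in the proof that $F_1$ is a homeomorphism: since $|\Psi(\widetilde\gamma_1)(t) - \Psi(\widetilde\gamma_2)(t)| = |\widetilde\gamma_1(ct) - \widetilde\gamma_2(ct)|$, the integral $\int_i^{i+1}|\Psi(\widetilde\gamma_1)(t)-\Psi(\widetilde\gamma_2)(t)|\,dt$ is, after the substitution $u = ct$, equal to $\tfrac1c\int_{ci}^{c(i+1)}|\widetilde\gamma_1(u)-\widetilde\gamma_2(u)|\,du$, and since $c$ is a fixed positive integer this regroups into finitely many of the blocks defining $\rho(\widetilde\gamma_1,\widetilde\gamma_2)$ with weights comparable (up to the fixed factor $2^{c}$) to the original weights; this gives $\rho(\Psi\widetilde\gamma_1,\Psi\widetilde\gamma_2) \le C\,\rho(\widetilde\gamma_1,\widetilde\gamma_2)$ for a constant $C=C(c)$, and a symmetric estimate for $\Psi^{-1}$.

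The key identity is the intertwining relation. Under $\Psi$, the time-one map $F_1$ of $Z$ corresponds to the \emph{time-}$c$ map of $\widetilde Z$, that is, $\Psi^{-1}\circ F_1 \circ \Psi = \widetilde F_1^{\,c}$, because $F_1(\Psi\widetilde\gamma)(t) = \Psi\widetilde\gamma(t+1) = \widetilde\gamma(ct+c) = \widetilde\gamma(c(t+1)) = \Psi(\widetilde F_1^{\,c}\widetilde\gamma)(t)$, using that $c$ is an integer so $\widetilde F_1^{\,c}\widetilde\gamma(\ponto) = \widetilde\gamma(\ponto + c)$. Therefore $F_1$ is topologically conjugate to $\widetilde F_1^{\,c}$, so by Proposition \ref{invariant conjugacy} $h(Z) = h(F_1) = h(\widetilde F_1^{\,c})$, and by Proposition \ref{prop:m-entropy} $h(\widetilde F_1^{\,c}) = c\,h(\widetilde F_1) = c\,h(\widetilde Z)$. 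Since $h(\widetilde Z) = \infty$ and $c>0$, we conclude $h(Z) = \infty$. (In fact this argument yields the stronger conclusion $h(Z) = c\,h(\widetilde Z)$.)

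The main obstacle I anticipate is the first step: verifying carefully that the time-rescaling $\widetilde\gamma \mapsto \widetilde\gamma(c\,\ponto)$ actually sends global trajectories of $\widetilde Z$ to global trajectories of $Z$ and is a bijection between the full orbit sets — in particular that the non-uniqueness branches (the choices allowed in items (iii)--(iv) of Definition \ref{definicao trajetorias}, and the junction times $t_i$ in Definition \ref{definicao trajetoria global}) are matched up correctly after rescaling. Once that bijection is in hand, the metric estimates and the intertwining identity are routine substitutions, and the entropy conclusion is immediate from the cited propositions.
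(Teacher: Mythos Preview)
Your proposal is correct and follows essentially the same route as the paper: the paper defines the time-rescaling map $H:\Omega\to\widetilde\Omega$, $H(\gamma)(t)=\gamma(t/c)$ (the inverse of your $\Psi$), verifies the same intertwining identity $H\circ F_1=\widetilde F_1^{\,c}\circ H$, and concludes $h(Z)=h(F_1)=h(\widetilde F_1^{\,c})=\infty$. You supply more detail than the paper does---checking that rescaling respects the Filippov convention and the branching rules of Definition~\ref{definicao trajetorias}, and giving explicit metric estimates for bicontinuity---whereas the paper simply asserts $H$ is a continuous bijection; your extra care also yields the sharper statement $h(Z)=c\,h(\widetilde Z)$ via Proposition~\ref{prop:m-entropy}, which the paper leaves implicit.
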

\begin{proof}
	Let $ \gamma $ be a trajectory of $ Z $. It is not difficult to see that $ \widetilde{\gamma}(t)=\gamma(\frac{1}{c}t) $ is trajectory of $ \widetilde{Z} $. If $ \Omega $ and $\widetilde{\Omega} $ represent the spaces of trajectories of $ Z $ and $ \widetilde{Z} $, respectively, we can define a continuous bijection between them $ H:\Omega\to\widetilde{\Omega} $, simply by $ H(\gamma)(t)=\widetilde{\gamma}(t)=\gamma(\frac{1}{c}t) $.
	
	Consider the following diagram:
	\begin{figure}[H]
		\begin{center}
			\begin{tikzpicture}
			\node (A) {$ \Omega $};
			\node (B) [right of=A] {$ \Omega $};
			\node (C) [below of=A] {$ \widetilde{\Omega} $};
			\node (D) [below of=B] {$ \widetilde{\Omega} $};
			\large\draw[->] (A) to node {\mbox{{\footnotesize $F_{1}$}}} (B);
			\large\draw[->] (C) to node {\mbox{{\footnotesize $ \widetilde{F_{1}}^{c} $}}} (D);
			\large\draw[->] (A) to node {\mbox{{\footnotesize $ H $}}} (C);
			\large\draw[->] (B) to node {\mbox{{\footnotesize $ H $}}} (D);
			\end{tikzpicture}
		\end{center}
	\end{figure}
	
	The diagram above commutes. In fact, for any $ t\in\R $, we have:
	\[ H\circ F_{1}(\gamma)(t)=F_{1}(\gamma)(\frac{1}{c}t)=\gamma(\frac{1}{c}t+1). \]
	
	On the other hand:
	\[ \widetilde{F_{1}}^{c}\circ H(\gamma)(t)=H(\gamma)(t+c)=\gamma(\frac{1}{c}t+1). \]
	
	Therefore $ h(Z)=h(F_{1})=h(\widetilde{F_{1}}^{c})=\infty $.
\end{proof}

\begin{theorem}\label{thm:infinite-entropy}
	Let $ Z=(X,Y) $ be a NSVF defined over a compact invariant set $ K\subset \R^2 $, with $ \Sigma^{e}\neq\emptyset $. Assume there exists a closed interval $ J\subset\Sigma^{e} $, such that for all $ x\in J $, there is a trajectory $ \gamma_{x} $ that escapes through $ x $ to $ \Sigma^{+} $ (or $ \Sigma^{-} $) and $ \gamma_{x} $ intersects $ J $ in finite time (see Figure \ref{fig:entropia-infinita}). Then $ h(Z)=\infty $.
\end{theorem}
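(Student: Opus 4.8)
The plan is to transplant, into the abstract setting of the hypothesis, the three–step strategy used for the concrete field (\ref{Eq Campo feijao}) in Example \ref{exemplo-feijao}: build a ``first–escape'' return map $P$ on a suitable set $S$ of trajectories, show $P$ is continuous, show $P$ is semi-conjugate onto a full shift (hence $h(P)=\infty$), and finally transfer this to $F_1$. \emph{Step 1 (the set $S$ and the map $P$).} Orient $\Sigma$ and let $J\subset\Sigma^e$ be the given interval; say the $\gamma_x$ escape to $\Sigma^+$ (the case $\Sigma^-$ being symmetric). Set
\[ S=\{\gamma\in\Omega : \gamma(0)\in J,\ \gamma\text{ escapes to }\Sigma^+\text{ at }t=0,\ \gamma\text{ returns to }J\text{ in the future}\}, \]
and for $\gamma\in S$ let $\tau_\gamma$ be the least $t>0$ with $\gamma(t)\in J$ at which $\gamma$ escapes to $\Sigma^+$, and $P(\gamma)(\cdot)=\gamma(\cdot+\tau_\gamma)$. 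The hypothesis gives $\tau_\gamma<\infty$, and since trajectories are defined for all $t$ and $K$ is invariant, $P$ maps $S$ onto $S$. Using the continuity of $\tau$ (Step 2), I would first shrink $J$ so that $\tau$ has arbitrarily small oscillation on the corresponding $S$ and then, if needed, rescale $Z\mapsto cZ$ with $c\in\Z_{>0}$ (Lemma \ref{lemma:rescaling-fields-same-entropy}) so that there is a fixed integer $k\ge 2$ with $k-1<\tau_\gamma<k$ for all $\gamma\in S$.

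\emph{Step 2 (continuity of $\tau$ and $P$).} This is a verbatim adaptation of Propositions \ref{lemma-tau-continua} and \ref{prop:p-continuous}. For $\tau$: if $\rho(\gamma_1,\gamma_2)$ is small, Proposition \ref{lemma-epsilon-proximos} forces $\gamma_1,\gamma_2$ uniformly close on a long time window; since at an escape point the trajectory leaves $\overline{\Sigma^e}$ and, after a short controlled delay, stays a definite distance away (the generating fields being bounded), $\gamma_2$ must escape within a short time of $\gamma_1$, so $|\tau_{\gamma_1}-\tau_{\gamma_2}|$ is small. For $P$: combine Propositions \ref{lemma-converg-pontual} and \ref{lemma-tau-continua} with the uniform bound on the one-sided derivatives of trajectories, exactly as in Proposition \ref{prop:p-continuous}.

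\emph{Step 3 (infinite entropy of $P$).} Let $\theta:J\to[0,1]$ be a linear parametrization and define the itinerary $s:S\to[0,1]^{\Z}$ by $s_j(\gamma)=\theta\big(P^j(\gamma)(0)\big)$. As in Proposition \ref{prop:p-entropia-infinita}, $s$ is continuous (from continuity of $P$ and smoothness of $\theta$) and $\sigma\circ s=s\circ P$, where $\sigma$ is the shift. The crucial — and most delicate — point is surjectivity of $s$: given $(\xi_j)_{j\in\Z}\in[0,1]^{\Z}$ one must produce $\gamma\in S$ with $s(\gamma)=(\xi_j)$. Here I would use that at every point of $\Sigma^e$ both $X$ and $Y$ point away from $\Sigma$, so no orbit reaches $\Sigma^e$ transversally; hence when the arc $\gamma_x$ returns to $J$ it must enter the component of $\Sigma^e$ containing $J$ from its upstream end and slide along it, and therefore from that return one may leave again at \emph{any} prescribed point of $J$. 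Concatenating the arcs $\gamma_{\theta^{-1}(\xi_j)}$ through these sliding segments, in both time directions, gives a global trajectory in $K$ realizing $(\xi_j)$ — its breakpoints tend to $\pm\infty$ because each arc takes a time in $(k-1,k)$. Thus $s$ is an onto semi-conjugacy from $P$ to $\sigma|_{[0,1]^{\Z}}$; since the latter contains the full shift on $N$ symbols for every $N$, $h\big(\sigma|_{[0,1]^{\Z}}\big)=\infty$, and Proposition \ref{invariant conjugacy} yields $h(P)=\infty$.

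\emph{Step 4 (transfer to $F_1$ and $Z$).} As in Theorem \ref{thm:feijao-entropia-infinita}, the identity $\tau_{P(\gamma)}+\tau_\gamma=k+\tau_{F_1^{k}(\gamma)}$ — both sides equal $\min\{s>k:\gamma(s)\text{ is an escape point of }\gamma\}$, using $k-1<\tau_\gamma<k$ and $k\ge2$ — makes $P\circ F_1^{k}=P\circ P$ on $S$, so the continuous onto map $P$ is a semi-conjugacy between $F_1^{k}$ and $P$; Proposition \ref{invariant conjugacy} gives $h(F_1)\ge h(P)=\infty$, and by Lemma \ref{lemma:rescaling-fields-same-entropy} the rescaling in Step 1 leaves this conclusion unchanged, so $h(Z)=\infty$. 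The main obstacle is the surjectivity of the itinerary map in Step 3: chaining escape arcs through an arbitrary prescribed sequence of points of $J$. The continuity statements of Step 2 and the diagram chase of Step 4 are routine once Example \ref{exemplo-feijao} is available; the geometric fact about how orbits re-enter $\Sigma^e$ is exactly what makes the abstract hypothesis strong enough to force a full shift.
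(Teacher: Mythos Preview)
Your proposal is correct and follows the paper's own proof essentially step for step: define $S$, $\tau$, $P$ as in Example~\ref{exemplo-feijao}, invoke Propositions~\ref{lemma-tau-continua}--\ref{prop:p-entropia-infinita} verbatim, repeat the diagram chase of Theorem~\ref{thm:feijao-entropia-infinita}, and handle the $(k-1,k)$ constraint via restriction of $J$ and the rescaling Lemma~\ref{lemma:rescaling-fields-same-entropy}. The only cosmetic difference is that the paper builds $S$ directly as the set of concatenations of arcs from any $x\in J$ to any $y\in J$ (asserting such arcs exist ``from hypothesis''), so that surjectivity of the itinerary map is immediate, whereas you define $S$ by properties and then supply the sliding-into-$\Sigma^e$ argument to get surjectivity---this is exactly the geometry the paper is tacitly using.
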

\begin{figure}[h]
	\begin{overpic}[width=.65\linewidth]{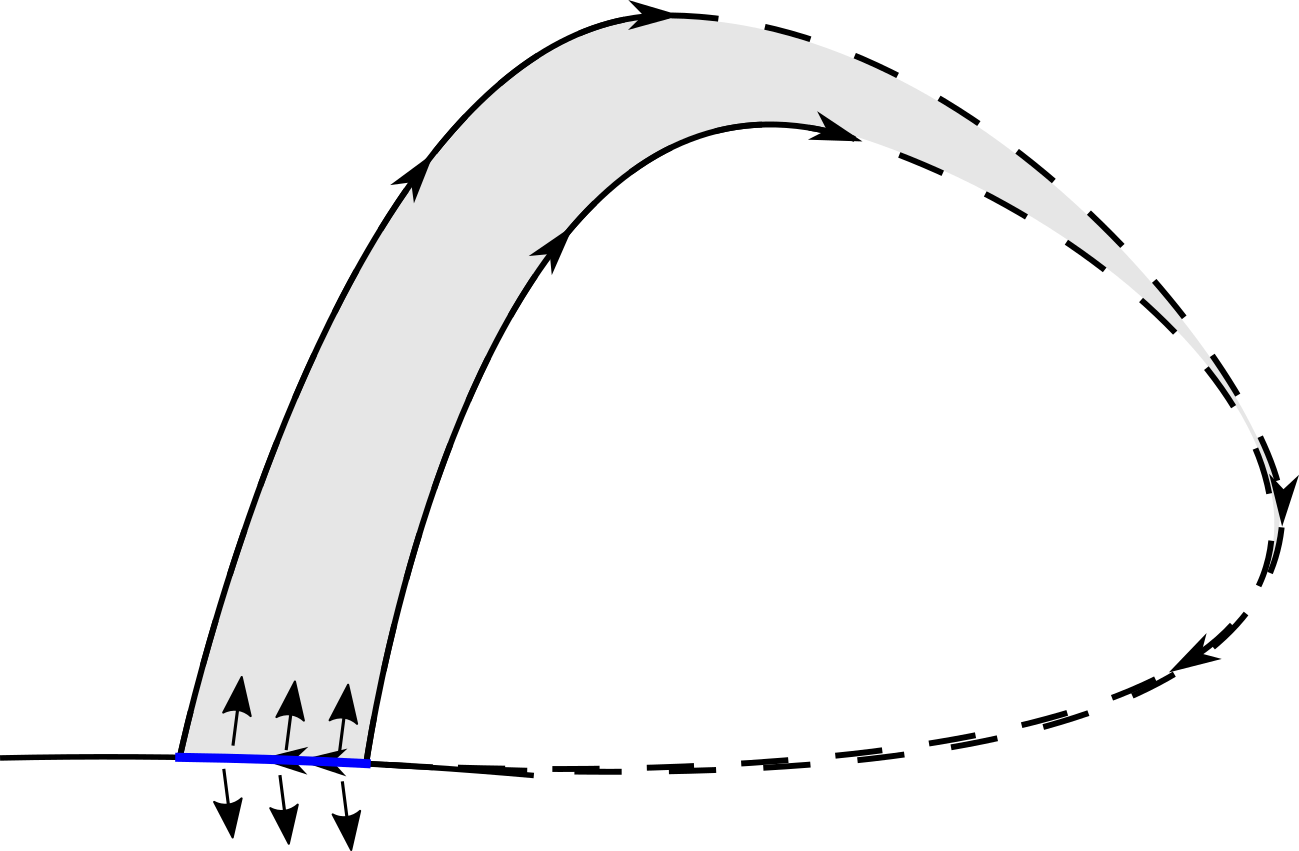}
		\put(2,11){\footnotesize $ \Sigma^{+} $} 
		\put(2,0){\footnotesize $ \Sigma^{-} $} 
		\put(-3.5,6){\footnotesize $ \Sigma $}
	\end{overpic}
	\caption{Every point of $ J $ (in blue) has a trajectory that escapes to $ \Sigma^{+} $ and returns to $ J $.} \label{fig:entropia-infinita}
\end{figure}

\begin{proof}
	From hypothesis, we get that for every $ x, y\in J $ there is an arc of trajectory escaping at $ x $ and going to $ y $. Consider $ S $ to be the set of every possible trajectory $ \gamma $ that is formed by a concatenation of these arcs and $ \gamma(0)\in J $ is a escape point. On this set $ S $, define the functions $ \tau $ and $ P $ in the same way as defined before. 
	
	Note that Propositions \ref{lemma-tau-continua} to \ref{prop:p-entropia-infinita} can be repeated word by word, since they do not depend on the vector field.
	
	In the example, we required an integer $ k>1 $ such that $ k-1<\tau_{\gamma}<k $ for all $ \gamma\in S $. If we can restrict to $ J_{0}\subset J $ in which this inequality holds, then the Theorem \ref{thm:feijao-entropia-infinita} can also be repeated which concludes the proof.
	
	If it is not the case, we must do some considerations first: since $ J $ is closed, $ \tau $ has a minimum value $ M>0 $, in this case less than 1, so let $ c>0 $ be the smallest integer such that $ 1<c\cdot M $ and define $ \widetilde{Z}=\frac{1}{c}Z $. Each one of the hypothesis are still true for $ \widetilde{Z} $, and now we can find an integer $ k $ satisfying that inequality. Hence $ h(\widetilde{Z})=\infty $ and the Lemma \ref{lemma:rescaling-fields-same-entropy} concludes the proof.
	
\end{proof}

\section{Examples}\label{examples}

\begin{example}
	Consider the NSVF $ Z=(X,Y) $ where $ X=\left(1,\frac{x}{2}-4x^3\right) $ and $ Y=\left(-1,\frac{x}{2}-4x^3\right) $ and $ \Sigma=\{y=0\} $, we have that $ h(Z)=\log 2 $. In fact, there is a set $ \Lambda $ that is $ Z- $invariant, that is the union of two  symmetric integral curves of $ X $ and $ Y $ (see Figure \ref{figura-8}) in which we can distinguish two arcs $ I_{1},I_{2} $ and apply the same calculations from the proof of Theorem \ref{thm-entropy-log-alpha}. Note that this example presents a polynomial NSVF that satisfies Theorem \ref{thm-entropy-log-alpha} for $ \alpha=2 $ with only two regions, using a visible two-fold.
	\begin{figure}[h]
		\begin{overpic}[width=.65\linewidth]{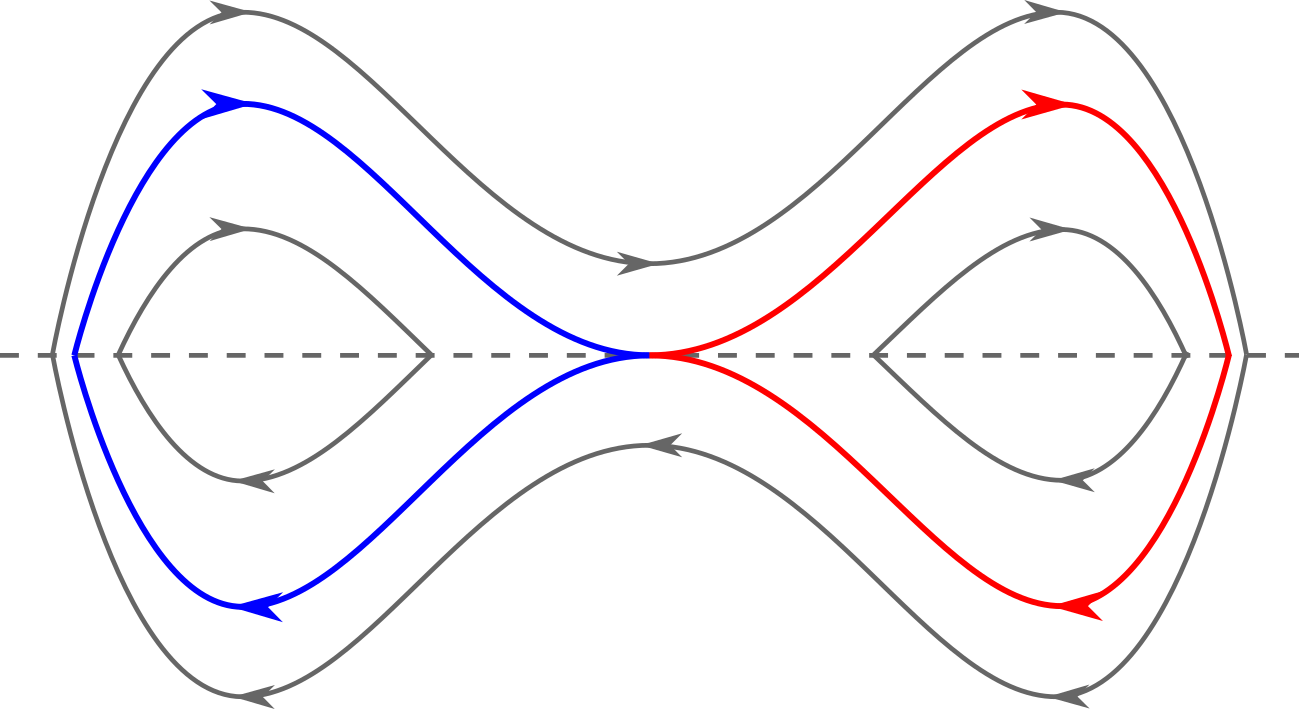}
		\end{overpic}
		\caption{Polynomial NSVF with just two regions and topological entropy equal $ \log 2 $}\label{figura-8}
	\end{figure}
\end{example}

\begin{example}
	In \cite{LF-HairyBall}, the authors present in details a NSVF $ Z $ tangent to $ \mathbb{S}^{2} $, without equilibrium points that has a non-trivial minimal set diffeomorphic to that presented in Example  \ref{exemplo-feijao} (see Figure \ref{fig:esfera-infinita}). Thus, by Theorem \ref{thm-closed-invariant}, $ h(Z)=\infty $.
\begin{figure}[h]
	\begin{overpic}[width=.5\linewidth]{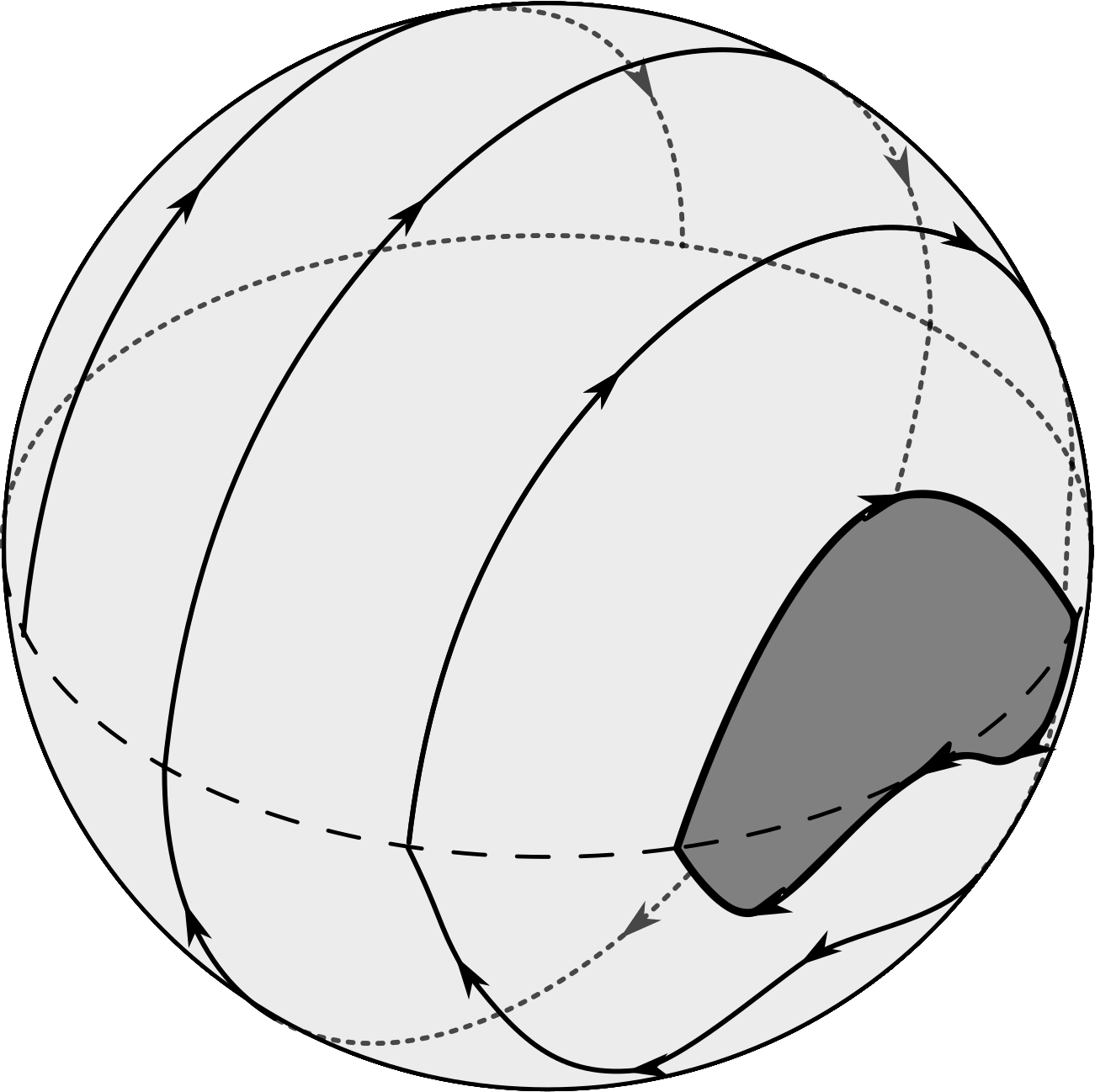}
	\end{overpic}
	\caption{NSVF tangent to $ \mathbb{S}^{2} $ with infinite topological entropy.}\label{fig:esfera-infinita}
\end{figure}

\end{example}

\begin{example}\label{exm:esfera-zero}
	Let $ Z $ be a NSVF defined over $ \mathbb{S}^{2} $, with discontinuity on $ \mathbb{S}^{1} $, and $ \Sigma^{+}, \Sigma^{-} $ are the northern and southern hemispheres. Consider $ X^{+} $ being a vector field without singularities, and $ X^{-} $ a vector field with a visible attracting node. Then it has a escaping region in some segment on $ \mathbb{S}^{1} $ and all global orbits tend to $ p $ in positive time. And in negative time it reaches the escaping region (see Figures \ref{fig:esfera-zero} and \ref{fig:esfera-zero-2}).
	\begin{figure}[h]
		\begin{overpic}[width=.5\linewidth]{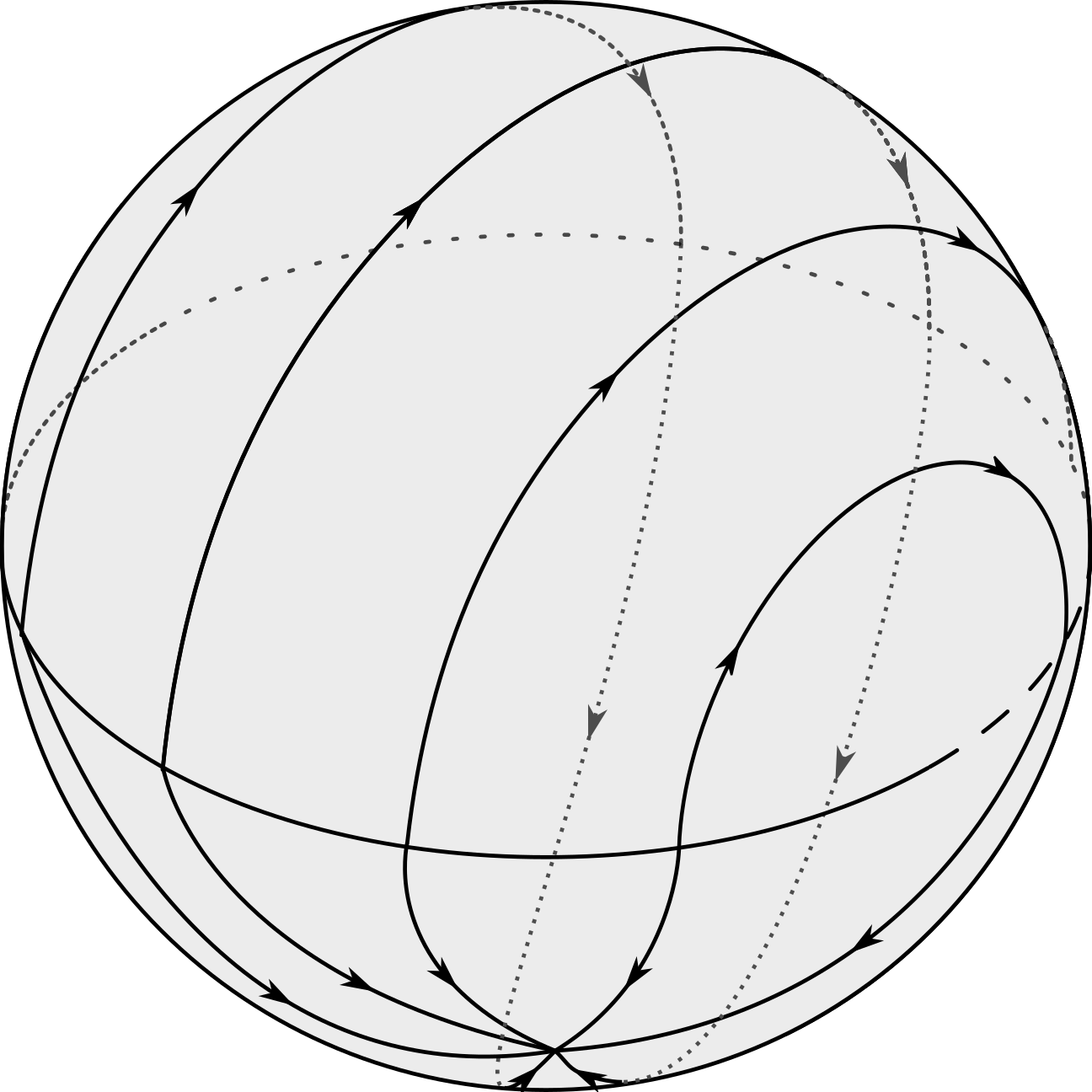}
		\end{overpic}
		\caption{NSVF tangent to $ \mathbb{S}^{2} $ with null topological entropy.}\label{fig:esfera-zero}
	\end{figure}

	\begin{figure}[h]
		\begin{overpic}[width=.8\linewidth]{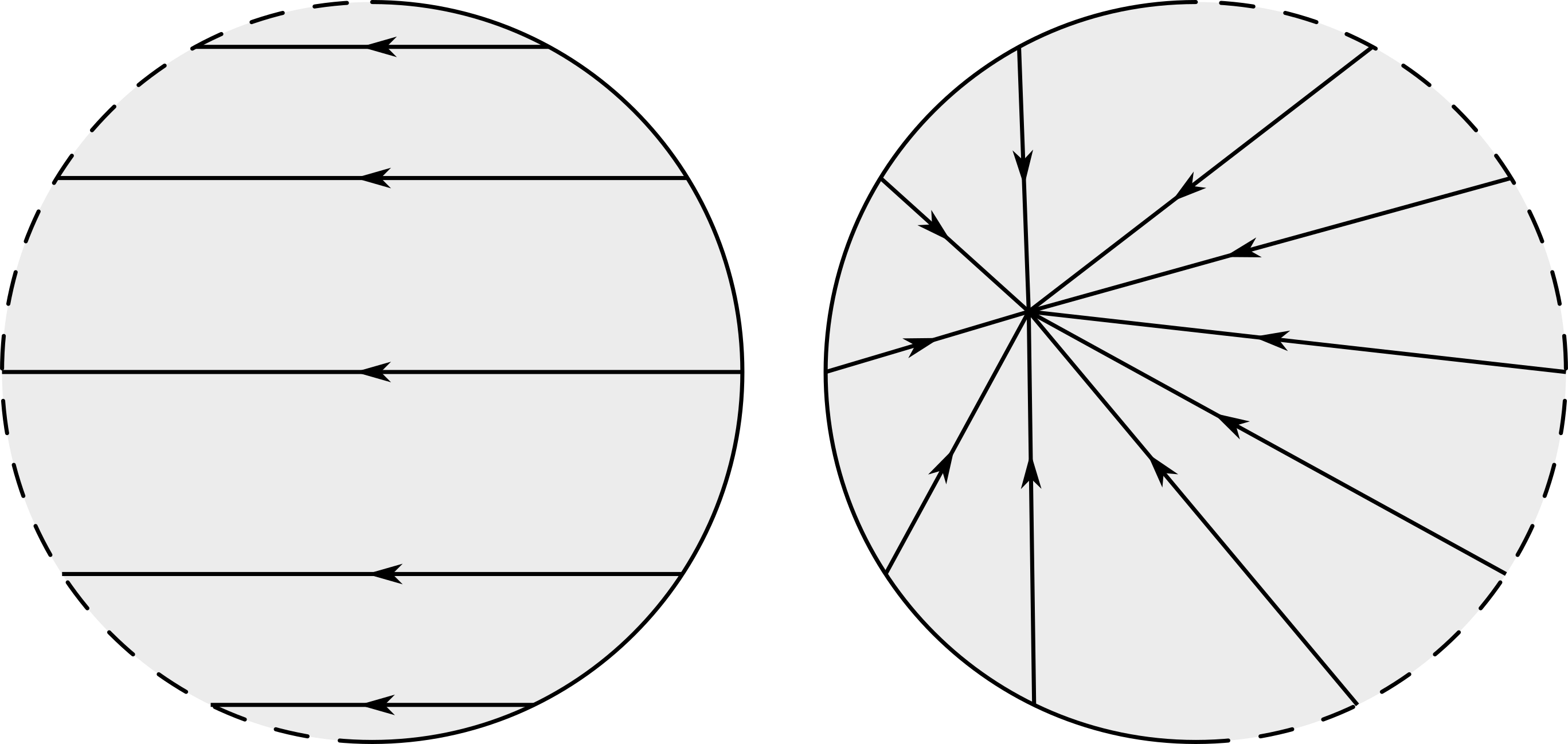}
		\end{overpic}
		\caption{Northern and southern hemispheres in Example \ref{exm:esfera-zero}.}\label{fig:esfera-zero-2}
	\end{figure}
	Now, for all $ \varepsilon>0 $, there exists $ m\in\N $ such that given any $ \gamma\in\Omega $, for any $ n>m $, $ |F_{1}^{n}(\gamma)(0)-p|<\varepsilon $, because every trajectory tends to $ p $. This implies that $ \rho(F_{1}^{n}(\gamma),\gamma_{p})< \varepsilon $.
	
	Then the $ \varepsilon $-capacity, $ S_{\rho}(F_{1},\varepsilon,n) $ does not grow with $ n $, then $ \limsup\frac{1}{n}\log S_{\rho}(F_{1},\varepsilon,n)=0 $. Hence $ h(Z)=0 $.

\end{example}

\section*{Acknowledgements}
A. A. Antunes is supported by grant \#2017/18255-6, São Paulo Research 
Foundation (FAPESP). T. Carvalho is partially supported by S\~{a}o Paulo Research Foundation (FAPESP grants 2019/10450-0 and 2019/10269-3) and by Conselho Nacional de Desenvolvimento Cient\'ifico e Tecnol\'ogico (CNPq Grant 304809/2017-9). R.Var\~ao was partially 
supported 
by National Council for Scientific and Technological Development – CNPq, Brazil 
and partially supported by FAPESP (Grants \#17/06463-3 and \# 16/22475-9).

\bibliographystyle{acm}
\bibliography{referencial}

\end{document}